\newcommand{\HH}{\mathcal{H}}
\newcommand{\EE}{\mathcal{E}}
\newcommand{\GG}{\mathcal{G}}
\newtheorem{theorem}{Theorem}
\newtheorem{corollary}[theorem]{Corollary}
\newtheorem{lemma}[theorem]{Lemma}
\newtheorem{observation}[theorem]{Observation}
\newtheorem{problem}[theorem]{Problem}
\author[Gábor Damásdi et al.]{Gábor Damásdi\affiliationmark{1,2}\thanks{Partially supported by ERC Advanced Grant GeoScape.}
  \and Balázs Keszegh\affiliationmark{1,2}\thanks{Research supported by the ERC Advanced Grant no.~101054936 ERMiD, the J\'anos Bolyai Research Scholarship of the Hungarian Academy of Sciences, by the National Research, Development and Innovation Office -- NKFIH under the grant K 132696 and FK 132060, by the \'UNKP-23-5 New National Excellence Program of the Ministry for Innovation and Technology from the source of the National Research, Development and Innovation Fund and by the Thematic Excellence Program TKP2021-NKTA-62 of the National Research, Development and Innovation Office.}\\
  \and Dömötör Pálvölgyi\affiliationmark{2,1}\thanks{Partially supported by the NRDI EXCELLENCE-24 grant no.~151504 Combinatorics and Geometry and by the ERC Advanced Grant no.~101054936 ERMiD, and earlier by the J\'anos Bolyai Research Scholarship of the Hungarian Academy of Sciences, and by the New National Excellence Program \'UNKP-23-5 and by the Thematic Excellence Program TKP2021-NKTA-62 of the National Research, Development and Innovation Office.}
  \and Karamjeet Singh\affiliationmark{3}\thanks{Supported by Overseas Research Fellowship of IIIT-Delhi, India.}}
\title{The complexity of recognizing $ABAB$-free hypergraphs}
\affiliation{
  HUN-REN Alfréd Rényi Institute of Mathematics, Budapest, Hungary.\\
  ELTE Eötvös Loránd University, Budapest, Hungary.\\
  IIIT-D Indraprastha Institute of Information Technology, Delhi, India.}
\keywords{Hypergraphs, Complexity, NP-hardness}
\begin{document}

\publicationdata{vol. 27:1, Permutation Patterns 2024}{2025}{8}{10.46298/dmtcs.14610}{2024-10-22; 2024-10-22; 2025-03-31}{2025-04-02}

\maketitle
\begin{abstract}
The study of geometric hypergraphs gave rise to the notion of $ABAB$-free hypergraphs. A hypergraph $\mathcal{H}$ is called $ABAB$-free if there is an ordering of its vertices such that there are no hyperedges $A,B$ and vertices $v_1,v_2,v_3,v_4$ in this order satisfying $v_1,v_3\in A\setminus B$ and $v_2,v_4\in B\setminus A$. In this paper, we prove that it is NP-complete to decide if a hypergraph is $ABAB$-free. We show a number of analogous results for hypergraphs with similar forbidden patterns, such as $ABABA$-free hypergraphs. As an application, we show that deciding whether a hypergraph is realizable as the incidence hypergraph of points and pseudodisks is also NP-complete.
\end{abstract}

\section{Introduction}

Let $\HH=(V,\EE)$ be a hypergraph with an ordered vertex set and $k\in\mathbb{N}$. We say that edges $H$ and $L$ \emph{form an $(AB)^k$ pattern in $\HH$} if there exist $h_1,l_1,h_2,l_2,\ldots, h_k,l_k \in V$ in this order such that $h_i\in H\setminus L$ and $l_j\in L\setminus H$ for all $1\le i\le k$ and $1\le j\le k$.\footnote{Equivalently, $H$ and $L$ form an $(AB)^k$ pattern if writing down respectively the symbol $h$ or $l$ for each vertex contained in only one of the hyperedges, we do \emph{not} get a Davenport–Schinzel sequence of order $2k-2$.}  An ordering $\pi$ of $V$ is said to be \emph{$(AB)^k$-free}, if no pair of edges form an $(AB)^{k}$ pattern. A hypergraph $\HH=(V,\EE)$ is called \emph{$(AB)^k$-free} if there is an ordering $\pi$ of $V$ with which $\HH$ is $(AB)^k$-free.

Similarly, $H$ and $L$ \emph{form an $(AB)^{k-1}A$ pattern in $\HH$} if there exist $h_1,l_1,h_2,l_2,\ldots h_{k-1},l_{k-1},h_k \in V$ in this order such that $h_i\in H\setminus L$ and $l_j\in L\setminus H$ for all $1\le i\le k$ and $1\le j\le k-1$, and we define \emph{$(AB)^{k-1}A$-free} orderings and \emph{$(AB)^{k-1}A$-free} hypergraphs accordingly.

$ABA$-free hypergraphs were introduced by the two middle authors \cite{abafree} to study pseudohalfplane arrangements from an abstract viewpoint, and this enabled them to generalize a result of Smorodinsky and Yuditsky about polychromatic colorings for halfplanes \cite{sy12}.
The notion was also generalized to a higher number of alternations in the same paper; for more background, see \cite{ackerman2020coloring}, or for a recent application, see \cite{lenovo}.
Similar alternations of hypergraphs were also studied independently in an entirely different context to bound the chromatic number of certain generalized Kneser graphs  \cite{hajiabol15}.

In this paper, we study the problem of deciding whether an input hypergraph is $(AB)^k$-free or not.
This is equivalent to whether it can be realized as a specific geometric hypergraph.
More precisely, a hypergraph $\HH=(V,\EE)$ is $(AB)^k$-free exactly if there are maps $\alpha\colon V\to \mathbb R^2$ and $\gamma\colon \EE\to \mathbf C(\mathbb R)$ (the set of continuous $\mathbb R\to \mathbb R$ functions) such that $\alpha(v)$ is over the graph of the function $\gamma(E)$ if and only if $v\in E$, and for any two distinct $E,E'\in\EE$, the graphs of $\gamma(E)$ and $\gamma(E')$ intersect at most $2k-2$ times.
A similar statement is true for $(AB)^kA$-free hypergraphs and $2k-1$ intersections.
If $k=2$, i.e., for $ABAB$-free hypergraphs, there is an equivalent characterization with the interiors of Jordan curves that intersect at most twice and enclose a common point; see \cite{ackerman2020coloring}.

We note that $ABAB$-freeness of 2-uniform hypergraphs can be decided in polynomial time. In \cite{ackerman2020coloring} it was shown that $ABAB$-free graphs are outerplanar. The converse is also true, any outerplanar graph is $ABAB$-free. Indeed, we can simply take the vertices in the order they appear along the outer face. (Cut vertices appear more than once along the boundary, we keep an arbitrary one of these.) Any $ABAB$ pattern would correspond to crossing chords, hence the ordering is $ABAB$-free.

We show the following results.

\begin{theorem}\label{thm:even}
For any positive integer $k\ge 2$, it is NP-complete to decide if a given hypergraph is $(AB)^k$-free.
\end{theorem}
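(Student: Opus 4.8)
The plan is to handle membership in $\NP$ in one line and then devote all the effort to hardness, proving the base case $k=2$ (the $ABAB$-free problem) in full and afterwards lifting it to arbitrary $k$. For membership, a certificate is simply an ordering $\pi$ of $V$: given $\pi$, one checks for every pair of edges $H,L\in\EE$ whether the two-letter trace recorded on their symmetric difference realizes $2k$ alternations, and this is clearly polynomial in $|V|+|\EE|$. So the whole content is the reduction.

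For hardness with $k=2$, I would first exploit a clean reformulation of the forbidden pattern. Since the roles of $A$ and $B$ are symmetric, forbidding $ABAB$ forbids $BABA$ as well, so an ordering is $ABAB$-free iff for every pair the two-letter trace on the symmetric difference has at most three maximal blocks; equivalently, the position-intervals spanned by $A\setminus B$ and by $B\setminus A$ are \emph{laminar}, i.e. disjoint or nested but never crossing. This recasts the problem as a purely combinatorial ordering question: place the vertices on a line so that a prescribed family of interval pairs never crosses. I would reduce a convenient $\NP$-complete satisfiability problem, such as a monotone or planar variant of $3$-SAT, to this ordering question. The reduction would use a rigid \emph{backbone} of auxiliary vertices and edges whose only $ABAB$-free orderings pin down a global left-to-right skeleton; \emph{variable} gadgets, each admitting exactly two $ABAB$-free local configurations to be read as true and false; and \emph{clause} gadgets designed so that the symmetric differences of two designated edges are forced to cross — producing an $ABAB$ pattern — precisely when every literal of the clause is false. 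Correctness then reduces to exhibiting a bijection between $ABAB$-free orderings of the instance and satisfying assignments.

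To reach every $k\ge 2$, I would pad the gadgets. A pair realizes an $(AB)^k$ pattern iff its trace carries at least $2k$ blocks, so it suffices to attach to the relevant edges a fixed pool of auxiliary vertices, positioned in the backbone so that the trace of every information-carrying pair contains a surplus of exactly $2k-4$ forced blocks coming from the padding, independent of the assignment. Then the total block count equals the core count plus $2k-4$, and the padded instance is $(AB)^k$-free iff the core behaves $ABAB$-freely, so the $k=2$ analysis transfers verbatim. Equivalently, one may phrase the variable and clause gadgets directly in terms of $2k-2$ permitted intersections, which is conceptually the same construction without reusing the $k=2$ hypergraph as a black box.

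I expect the principal obstacle to be global consistency rather than the local logic of any single gadget. A single linear order must satisfy all gadgets at once, so the backbone must be rigid enough that gadgets cannot be permuted or interleaved to cheat, yet flexible enough that every satisfying assignment genuinely yields a valid ordering; in particular I must rule out spurious $ABAB$ patterns between edges living in different gadgets. Arranging the padding to contribute the same number $2k-4$ of forced blocks uniformly to all relevant pairs, without inadvertently creating extra crossings elsewhere, is the delicate quantitative ingredient of the generalization to large $k$.
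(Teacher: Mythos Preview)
Your overall architecture --- membership in $\NP$, then hardness for $k=2$, then a lift to general $k$ --- mirrors the paper, and the laminar reformulation of $ABAB$-freeness is correct. But as written this is a plan, not a proof: for the base case you describe the shape of a SAT reduction (backbone, variable gadgets, clause gadgets) without building any gadget, and you explicitly flag the global-consistency issue --- ruling out spurious $ABAB$ patterns between edges of different gadgets --- as unresolved. That is precisely where all the work lies. For comparison, the paper reduces from $2$-colorability of $3$-uniform hypergraphs rather than SAT, and the construction is fully explicit: each input vertex $v_i$ becomes a $2$-element block $R_i$, each input edge $e_j$ becomes a $4$-element block $S_j$, a family of circular-interval edges forces every $ABAB$-free ordering to respect this block structure up to equivalence, six further interval-like edges per $e_j$ tie the internal order of each relevant $R_i$ to the internal order of $S_j$, and a complement pair on $S_j$ forbids exactly the two monochromatic orderings. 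The ``no spurious patterns'' verification is then carried out case by case.

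The lift contains a concrete gap. You assert that one can attach a fixed pool of auxiliary vertices so that the symmetric-difference trace of \emph{every} information-carrying pair gains exactly $2k-4$ extra blocks, uniformly and independently of the assignment. This is a strong global constraint that you do not meet, and it is unclear it can be met: the padding vertices are shared, so two edges carrying the same padding contribute zero extra blocks on that pair, while edges with different padding contribute a pair-dependent amount; forcing a constant $2k-4$ across all pairs simultaneously is exactly the kind of combinatorial miracle that must be exhibited, not asserted. The paper's lift is of a different nature and sidesteps this. It introduces $t=2k+2$ new vertices $X=\{x_1,\dots,x_t\}$ and, keeping all old edges, also adds $E\cup\{x\}$ for every old $E\in\EE$ and every $x\in X$; call the result $\HH_t$. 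One then proves $\HH$ is $(AB)^k$-free iff $\HH_t$ is $(AB)^{k+1}$-free: placing $X$ last in the order, any $(AB)^{k+1}$ pattern in $\HH_t$ uses at most one $X$-vertex per participating edge, so its first $2k$ positions are old and witness an $(AB)^k$ pattern in $\HH$; conversely, the $2k$ positions of an $(AB)^k$ pattern in $\HH$ cut the line into $2k+1$ gaps, so by pigeonhole two of the $2k+2$ vertices of $X$ share a gap in any ordering of $\HH_t$, and augmenting $H$ and $L$ by one each promotes the pattern to $(AB)^{k+1}$. Iterating this step from $k=2$ gives every $k$.
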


\begin{theorem}\label{thm:odd}
For any positive integer $k\ge 2$, it is NP-complete to decide if a given hypergraph is $(AB)^{k}A$-free.
\end{theorem}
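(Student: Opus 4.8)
The plan is to prove Theorem~\ref{thm:odd} by reduction from Theorem~\ref{thm:even}, or more precisely to mimic the proof of Theorem~\ref{thm:even} (which presumably reduces from some NP-hard problem such as \textsc{3-SAT} or a planarity/betweenness variant) while accounting for the extra trailing $A$. Since the two theorems differ only in whether the forbidden alternation pattern has even length $(AB)^k$ or odd length $(AB)^kA$, I expect the hardness gadgets to be structurally almost identical, so the most economical route is to reuse the $(AB)^k$ construction and patch it locally.

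First I would recall that membership in \NP{} is immediate for both problems: the certificate is the ordering $\pi$ of $V$, and given $\pi$ one checks in polynomial time (e.g. $O(|\EE|^2 |V|)$) that no pair of hyperedges $H,L$ realizes the pattern $(AB)^kA$, by scanning the vertices in order and tracking the alternation length of the symbols $h,l$ read off from $H\triangle L$. So the real content, as in Theorem~\ref{thm:even}, is the NP-hardness.

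For hardness, the key step is to take whatever reduction establishes Theorem~\ref{thm:even} and adapt each gadget so that a valid vertex ordering exists if and only if the source instance is satisfiable, now with the odd pattern as the obstruction. The natural device is to augment every hyperedge $E$ with one or a few extra ``anchor'' vertices placed at a fixed extreme position (say at the very front of every admissible ordering), forced there by additional rigid gadgets. Adding a common prefix vertex that lies in $H$ but not in $L$ turns a latent $(AB)^k$ conflict into an $(AB)^kA$ conflict and vice versa, because it prepends an extra $A$-symbol to the read-off sequence; by symmetric construction one can raise or lower the parity of the critical alternation. The main obstacle I anticipate is controlling side effects: the parity-shifting anchors must be forced into their intended positions \emph{without} themselves creating spurious $(AB)^kA$ patterns among the other hyperedges, and without collapsing the equivalence in the original reduction. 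This requires a careful argument that in every $(AB)^kA$-free ordering the anchors occupy the prescribed block, typically proved by a small rigidity lemma showing that certain auxiliary hyperedges mutually force a canonical relative order.

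Concretely, the steps in order would be: (1) state the \NP{} membership with the ordering certificate; (2) fix the source hard instance and the gadget library from the proof of Theorem~\ref{thm:even}; (3) describe the parity-shift modification (anchor vertices plus forcing hyperedges) that converts $(AB)^k$-obstructions into $(AB)^kA$-obstructions; (4) prove a rigidity lemma pinning the anchors and forcing hyperedges into a canonical order in any $(AB)^kA$-free ordering; (5) verify the forward direction (a satisfying assignment yields an $(AB)^kA$-free ordering) and the backward direction (an $(AB)^kA$-free ordering yields a satisfying assignment), both inheriting their structure from the even case via the parity shift; and (6) note that the reduction is polynomial since only $O(1)$ vertices and hyperedges are added per gadget. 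The delicate bookkeeping in steps~(3)--(4) — ensuring the parity shift is clean on \emph{every} relevant pair of hyperedges simultaneously — is where I expect the bulk of the technical work to lie.
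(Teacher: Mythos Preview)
Your high-level instinct — shift the parity of the forbidden alternation by adding anchor vertices and forcing them to a fixed extreme of the order — is exactly the mechanism the paper uses, but the paper organizes the argument differently and your proposal leaves the crucial step entirely unexecuted.

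The paper does \emph{not} reduce $(AB)^k$-freeness to $(AB)^kA$-freeness at each $k$. Instead it proves the base case $k=2$ (Theorem~\ref{thm:ababa}) by a reduction from the specific $ABAB$ hypergraph $\HH$ built in Section~\ref{sec:abab}: one new vertex $N+1$ is appended, the edges $\EE_+=\{E\cup\{N+1\}:E\in\EE\}$ are added, and a family $\EE_0$ of $2$-intervals (via Corollary~\ref{cor:2intervals}) rigidly forces $N+1$ to the end of any $ABABA$-free ordering. The correctness proof is not generic — it relies on structural facts about the particular edges of $\HH$ (Observation~\ref{obs:almostinterval}, Lemma~\ref{lem:complement}). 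Then, for $k\ge 3$, the paper uses a completely clean and self-contained inductive step (Lemma~\ref{lem:odd-odd}): $\HH$ is $(AB)^{k-1}A$-free iff $\HH_{2k+1}$ (obtained by adding $2k+1$ fresh vertices and, for each $E\in\EE$ and each new vertex $x$, the edge $E\cup\{x\}$) is $(AB)^kA$-free. No rigidity gadgets are needed here at all; pigeonhole on the $2k+1$ new vertices does the work.

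So the gap in your plan is twofold. First, the ``rigidity lemma pinning the anchors'' and the verification that the parity shift is clean on every pair of hyperedges are precisely where all the content lies, and you have not supplied them; the paper's $k=2$ argument shows this is nontrivial and depends on the internal structure of the source hypergraph, so you cannot simply say ``add an anchor and it works for arbitrary $\HH$''. Second, your scheme of redoing the even$\to$odd patch at every $k$ is strictly harder than what the paper does: once the $k=2$ case is in hand, the odd$\to$odd induction of Lemma~\ref{lem:odd-odd} is a five-line argument with no gadgets, whereas your route would require reproving rigidity at each level. If you want to salvage your approach, the honest fix is to carry out your parity-shift reduction concretely for $k=2$ (which will look very much like Section~\ref{sec:ababa}) and then invoke an inductive lemma analogous to Lemma~\ref{lem:odd-odd} rather than repeating the gadgetry.
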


Let $\pi$ be any ordering of the vertices of a hypergraph $\mathcal{H}$, and $k$ be any fixed positive integer.
It is easy to see that one can decide in polynomial time if $\mathcal{H}$ is $(AB)^k$-free or if it is $(AB)^kA$-free w.r.t. the ordering $\pi$.
This shows that the decision problems in Theorem \ref{thm:even} and Theorem \ref{thm:odd} are in class NP.
Therefore, to prove the theorems, it is enough to show the NP-hardness of these problems.

In Sections \ref{sec:abab} and \ref{sec:ababa} we show that deciding $ABAB$-freeness and deciding $ABABA$-freeness are NP-hard. Then in Section \ref{sec:oddeven} we show how Theorem \ref{thm:even} and Theorem \ref{thm:odd} follow from these results.
As a geometric application, we show in Section \ref{sec:questions} that it is NP-complete to decide whether a given hypergraph is realizable as the incidence hypergraph of points and a family of pseudodisks.
These results leave the complexity of $ABA$-freeness undecided; this is discussed in Section \ref{sec:open}.

\section{$ABAB$-freeness is NP-complete}\label{sec:abab}
In this section we show the following special case of Theorem \ref{thm:even}.

\begin{theorem}\label{thm:abab}
It is NP-hard to decide if a given hypergraph is $ABAB$-free.
\end{theorem}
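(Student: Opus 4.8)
The plan is to establish NP-hardness by a polynomial-time reduction from a standard NP-complete problem (3-SAT, or a convenient restriction such as monotone or planar 3-SAT), after first recasting the pattern-avoidance condition in a more usable combinatorial form. The key preliminary observation is that, since the roles of $A$ and $B$ in the definition are interchangeable, an ordering $\pi$ forbids the $ABAB$ pattern for a pair of edges exactly when it forbids \emph{every} $4$-term alternation on their symmetric difference; equivalently, reading the vertices of $A\setminus B$ and $B\setminus A$ in $\pi$-order and recording for each which edge it belongs to yields a binary sequence with at most three maximal monochromatic runs. Thus $\HH$ is $ABAB$-free iff there is an ordering under which every pair of edges is ``non-crossing'' in this block sense, and the whole difficulty is to force the intended interleavings while ruling out any fourth block.

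With this reformulation in hand, I would build the instance from three kinds of gadgets. First, a rigid \emph{frame}: a collection of vertices together with many mutually constraining edges whose only $ABAB$-free orderings (up to the unavoidable global reversal symmetry) place the frame vertices in one prescribed order, giving a fixed reference line into which the remaining gadgets are inserted. Second, a \emph{variable gadget} for each $x_i$: a small set of vertices and edges that, relative to the frame, admits exactly two $ABAB$-free configurations, to be read as the truth values of $x_i$, while any third configuration already produces a forbidden alternation. Third, a \emph{clause gadget} for each clause: edges designed to overlap the three literal regions so that a fourth alternating block is forced precisely when all three literals are set to the non-satisfying value, and is avoidable otherwise. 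The construction is clearly of polynomial size in the formula.

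For correctness I would argue both directions. If the formula is satisfiable, I place each variable gadget in the configuration dictated by a satisfying assignment and verify directly that no pair of edges acquires four blocks, exhibiting an $ABAB$-free ordering. Conversely, from any $ABAB$-free ordering I first use frame rigidity to normalize to the reference line, then read off a truth value from each variable gadget, and finally use the clause gadgets to conclude that every clause is satisfied, since an unsatisfied clause would force a $4$-term alternation. The main obstacle --- and where the bulk of the technical work lies --- is the gadget design itself: because the condition quantifies over all $\binom{|\EE|}{2}$ pairs of edges simultaneously, I must prove frame rigidity and, crucially, rule out \emph{crosstalk}, i.e., that edges from unrelated gadgets never accidentally create or destroy a pattern. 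Achieving a set of gadgets whose only residual degree of freedom is the intended binary choice per variable, and verifying the absence of spurious interactions across all pairs, is the delicate heart of the proof.
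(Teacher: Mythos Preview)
Your proposal is a reasonable high-level outline, but it is not a proof: you explicitly defer the entire technical content---the gadget constructions and the crosstalk verification---calling it ``the delicate heart of the proof.'' Without concrete gadgets there is nothing to check, and such constructions are far from automatic here. One small but telling slip already appears in your frame description: $ABAB$-freeness is invariant under cyclic shifts as well as reversal, so no frame gadget can pin the ordering down to ``one prescribed order up to reversal''; the best attainable is rigidity up to the full dihedral group, and the remaining gadgets must be designed to respect this.

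The paper carries out exactly the program you sketch (reducing from 2-colorability of 3-uniform hypergraphs rather than 3-SAT), and the substance lies entirely in the parts you omit: a rigidity lemma showing that the family of all \emph{circular} intervals on $[N]$ admits only the dihedral orderings as $ABAB$-free orderings; explicit two-vertex variable blocks $R_i$ and four-vertex clause blocks $S_j$; carefully chosen near-interval ``wire'' edges $I_{i,j,r}$, $I'_{i,j,r}$ that differ on only four vertices and propagate the binary choice from $R_i$ into the correct pair inside $S_j$; a complement-pair trick that forbids precisely the two monochromatic orderings of each $S_j$; and a case analysis ruling out $ABAB$ patterns between wires coming from different $(i,j)$. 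None of these ingredients appears in your outline, and the last is exactly where a casually designed gadget system tends to break. As it stands, the proposal identifies the right shape of argument but leaves the actual proof unwritten.
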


For the proof of Theorem \ref{thm:abab} we need a couple of notations and lemmas. Throughout the proof we will have a fixed $N$, and the vertex set of our hypergraph will be $\{1,2,\ldots, N\}$. So for the following definitions and lemmas assume that $N$ is fixed. For $S\subseteq\{1,2,\ldots, N\}$ we say that $S$ is an \emph{interval}, if the elements of $S$ are consecutive, and $S$ is a \emph{$t$-interval} if it is the union of $t$ intervals (some of which might be empty).
For $x<y$, let $[x,y]$ denote the interval $\{x,\dots, y\}$.

Furthermore, $S$ is a \emph{circular interval} if it is either an interval or a 2-interval containing both $1$ and $N$. $S$ is a \emph{circular $t$-interval} if it is the union of $t$ circular intervals.
For $x<y$, let $[y,x]$ denote the circular interval $[y,N]\cup [1,x]$.

For a permutation $\pi$ we say that $\pi'$ is a \emph{cyclic shift of $\pi$} if we can get $\pi'$ from $\pi$ by repeatedly moving the last entry to the first position. We say that two orderings are \emph{equivalent} if we can get one from the other by applying cyclic shifts, or by reversing the order and then applying cyclic shifts. 

\begin{observation}\label{obs:cyclic}
    If $\pi_1$ and $\pi_2$ are equivalent, then $\pi_1$ is an $(AB)^k$-free ordering if and only if $\pi_2$ is $(AB)^k$-free ordering.
\end{observation}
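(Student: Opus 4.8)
The plan is to reformulate $(AB)^k$-freeness as a purely \emph{circular} condition, from which invariance under both moves generating the equivalence relation---cyclic shifts and reversal---becomes transparent. Arrange the $N$ vertices on a circle in the cyclic order determined by $\pi$. I claim that $\pi$ admits an $(AB)^k$ pattern for some pair of distinct edges $H,L$ if and only if this circular arrangement contains $2k$ vertices $x_1,\dots,x_{2k}$, occurring in this cyclic order, that alternate between $H\setminus L$ and $L\setminus H$ \emph{cyclically}; that is, every cyclically consecutive pair $x_j,x_{j+1}$ (indices taken mod $2k$) lies on opposite sides of the symmetric difference. Once this is established, the observation is immediate: a cyclic shift leaves the circular arrangement literally unchanged, and reversing $\pi$ merely reverses the orientation of the circle, which carries a cyclically alternating tuple to another cyclically alternating tuple. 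Since equivalent orderings induce the same circular arrangement up to orientation, they share this circular condition, hence share $(AB)^k$-freeness.

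For the forward direction, take a witness $h_1<l_1<\dots<h_k<l_k$ for an $(AB)^k$ pattern in $\pi$ and read these same $2k$ vertices around the circle. The pairs $h_i,l_i$ and $l_i,h_{i+1}$ alternate by the definition of the pattern, and the only new adjacency created by closing the circle is the pair $l_k$ followed by $h_1$; as $l_k\in L\setminus H$ and $h_1\in H\setminus L$, this pair alternates too, so the tuple is cyclically alternating.

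The backward direction is where the real work lies. Given a cyclically alternating tuple $x_1,\dots,x_{2k}$, I linearize it by cutting the circle exactly where $\pi$ is cut, namely between the last and the first vertex of $\pi$. The $2k$ vertices then appear in $\pi$ in the order $x_i,x_{i+1},\dots,x_{2k},x_1,\dots,x_{i-1}$ for some index $i$. Every adjacency in this linear reading is a cyclically consecutive pair of the original tuple---crucially including the wrap-around pair $x_{2k},x_1$, whose alternation is guaranteed precisely because $2k$ is even so that $x_1$ and $x_{2k}$ lie on opposite sides---so the reading still alternates throughout its full length $2k$. If $x_i\in H\setminus L$ this is an $(AB)^k$ pattern for the pair $(H,L)$; if instead $x_i\in L\setminus H$ it is an $(AB)^k$ pattern for the same unordered pair with the roles of $H$ and $L$ exchanged. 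In either case $\pi$ fails to be $(AB)^k$-free, which proves the contrapositive.

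The main obstacle is thus concentrated in this last step: verifying that, no matter where the linearization cut happens to fall relative to the tuple, the resulting length-$2k$ subsequence is a bona fide alternating pattern. This hinges on two points that must be handled carefully---the parity of $2k$, which makes the wrap-around adjacency alternate, and the symmetry of the definition in the unordered pair $\{H,L\}$, which lets us accept a reading that begins on the $L$-side as an equally forbidden pattern. With the circular reformulation in hand, the invariance under cyclic shifts is a tautology and the invariance under reversal costs only an orientation flip, so these two routine checks complete the argument.
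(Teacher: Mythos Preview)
Your argument is correct. The paper itself offers no proof of this observation; it states it and moves on, remarking only that the analogue fails for $(AB)^kA$-patterns. Your circular reformulation is exactly the right way to make the observation transparent, and your identification of the two crucial ingredients---the evenness of $2k$, which makes the wrap-around adjacency alternate, and the symmetry of the pattern in the unordered pair $\{H,L\}$---is precisely what distinguishes the even case from the odd one the paper flags. So while there is no ``paper proof'' to compare against, your approach is the intended justification spelled out in full.
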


Note that Observation \ref{obs:cyclic} does not hold for $(AB)^kA$-free hypergraphs.

For a set $S\subset \{1,2,\dots,N\}$ let $S^c$ denote the complement of $S$, that is $S^c=\{1,2,\dots,N\}\setminus S$. Note that the complement of a circular interval is also a circular interval.

\begin{lemma}\label{lem:complement}
Let $V=\{1,2,\ldots, N\}$ and $\EE$ be any collection of subsets of $V$. Assume that $\EE$ contains $S$ and also $S^c$ for some $S\subset V$.
\begin{enumerate}
\item[a)] If $\pi$ is an $ABAB$-free ordering of $\HH=(V,\EE)$, then either the elements of $S$ or the elements of $S^c$ are consecutive in $\pi$.
(That is, $S$ and $S^c$ are circular intervals with respect to the ordering $\pi$.)
\item[b)] If the vertices of $S$ or the elements of $S^c$ are consecutive in $\pi$, then neither $S$ nor $S^c$ forms an $ABAB$ pattern with any subset of $V$. 
\end{enumerate}

\end{lemma}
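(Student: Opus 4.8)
The plan is to prove both parts by analyzing when the alternation pattern $ABAB$ can occur between $S$, $S^c$, and a general edge. For part b), the key observation is that an $ABAB$ pattern between two edges requires four vertices $v_1,v_2,v_3,v_4$ with $v_1,v_3$ in one edge's difference and $v_2,v_4$ in the other's. I would first note that if the vertices of $S$ are consecutive in $\pi$, then they occupy a contiguous block, so for any other edge $X$, the elements of $S\setminus X$ all lie inside this block. To form an $ABAB$ pattern we would need two elements of $S\setminus X$ to straddle two elements of $X\setminus S$ in the order $v_1<v_2<v_3<v_4$; but since all of $S$ (and hence all of $S\setminus X$) is contiguous, while $v_2\in X\setminus S$ lies \emph{outside} $S$, it is impossible to have $v_1,v_3\in S\setminus X$ with $v_2$ between them. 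This handles $S$. Since $S^c$ is the complement, the same argument applies symmetrically once we observe (using Observation~\ref{obs:cyclic}) that the elements of $S^c$ are consecutive in the cyclic sense exactly when $S$ is an interval; I would cyclically shift so that $S$ becomes a genuine (non-wraparound) interval and repeat the argument for $S^c$.

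For part a), I would argue by contrapositive: suppose that \emph{neither} $S$ nor $S^c$ is a circular interval in $\pi$, and derive an $ABAB$ pattern between $S$ and $S^c$ (which are the two edges in question, and are disjoint with $S\cup S^c=V$, so $S\setminus S^c=S$ and $S^c\setminus S=S^c$). Because $S$ and $S^c$ partition $V$, reading off the ordering $\pi$ gives a binary string where each position is labelled by whether it belongs to $S$ or to $S^c$. The statement that $S$ is not a circular interval means, in this string, that the $S$-positions form at least two maximal blocks even after allowing the wraparound identification of position $N$ with position $1$; symmetrically for $S^c$. I would show that if both $S$ and $S^c$ fail to be circular intervals, the string (viewed cyclically) must contain at least two $S$-blocks and at least two $S^c$-blocks, hence the cyclic sequence of labels alternates $S,S^c,S,S^c,\dots$ at least four times around the circle.

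The crux is then to extract a genuine (linear, non-cyclic) $ABAB$ occurrence from this cyclic alternation. Here I would use Observation~\ref{obs:cyclic}: the $ABAB$-freeness of $\pi$ is a cyclic invariant, so I may cyclically shift the ordering to place a block boundary at the start. After shifting so that the ordering begins with a maximal $S$-block, the linear reading encounters, in order, a block of $S$, then a block of $S^c$, then (since there are at least two blocks of each around the circle) another block of $S$, then another block of $S^c$. Picking one representative from each of these four blocks yields $v_1\in S$, $v_2\in S^c$, $v_3\in S$, $v_4\in S^c$ with $v_1<v_2<v_3<v_4$, which is exactly an $ABAB$ pattern with $A=S$ and $B=S^c$. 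This contradicts $ABAB$-freeness, completing part a).

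The main obstacle I anticipate is the bookkeeping around the cyclic versus linear distinction: the notion of \emph{circular} interval is genuinely what makes part a) true (a single non-wraparound interval and a single wraparound $2$-interval are the only configurations avoiding the alternation), so the delicate point is to verify that ``not a circular interval'' for both $S$ and $S^c$ forces at least two blocks of each \emph{on the circle}, and that a cyclic shift can always align things so the four representatives appear in increasing linear order. Invoking Observation~\ref{obs:cyclic} is what legitimizes this shift, and I would be careful to state explicitly that reversing plus shifting covers the remaining boundary cases so that no genuine circular-interval configuration is wrongly flagged.
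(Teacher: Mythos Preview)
Your proposal is correct and follows essentially the same approach as the paper: part~(a) by contrapositive (if neither set is consecutive, the alternating block structure of the partition $S\cup S^c=V$ immediately yields an $ABAB$ pattern between $S$ and $S^c$), and part~(b) via Observation~\ref{obs:cyclic} to shift the interval to one end. The only difference is an unnecessary detour through the cyclic picture in part~(a): since the hypothesis is that neither $S$ nor $S^c$ is a \emph{linear} interval, both already have at least two linear blocks, so the string $\pi$ exhibits $S,S^c,S,S^c$ directly without any shift.
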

\begin{proof}
For part a) note that if neither the elements of $S$ nor the elements of $S^c$ are consecutive, then $S$ and $S^c$ together form an $ABAB$ pattern.

For part b) we can use Observation \ref{obs:cyclic} and move $S$ (or $S^c$) to the beginning of the ordering. If we have an interval at the beginning of the order, it clearly cannot form an $ABAB$ pattern with any other subset of $V$.   
\end{proof}

\begin{lemma}\label{lem: intervals}
Let $V=\{1,2,\ldots, N\}$ where $N\ge 4$ and let $\EE$ be the set of all circular intervals of $[N]$.
Then the hypergraph $\HH=(V,\EE)$ with an ordering $\pi$ is $ABAB$-free if and only if $\pi$ is equivalent to the increasing order $1,2,\ldots, N$. 
\end{lemma}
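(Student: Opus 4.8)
The plan is to prove both implications, in each case using Observation~\ref{obs:cyclic} to pass to cyclic orders and Lemma~\ref{lem:complement} to reduce $ABAB$-freeness to a statement about which sets are consecutive. For the ``if'' direction it suffices, by Observation~\ref{obs:cyclic}, to check the increasing order $1,2,\dots,N$ itself. Every edge $E\in\EE$ is a circular interval, hence consecutive in the increasing order, and its complement $E^c$ is again a circular interval and therefore also belongs to $\EE$. Thus Lemma~\ref{lem:complement}b) applies with $S=E$ and shows that $E$ forms no $ABAB$ pattern with any subset of $V$. As this holds for every edge, in particular no two edges form an $ABAB$ pattern, so the increasing order is $ABAB$-free.

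For the ``only if'' direction, assume $\pi$ is an $ABAB$-free ordering; I aim to show that its cyclic order coincides with the natural one up to reflection. The crucial observation is that \emph{every} circular interval is consecutive with respect to $\pi$: if $S$ is a circular interval then so is $S^c$, so both lie in $\EE$, and Lemma~\ref{lem:complement}a) forces one of $S,S^c$ to be consecutive in $\pi$; since the complement of a cyclically consecutive set is again cyclically consecutive, in fact both are. Applying this to the cyclically adjacent pairs $\{i,i+1\}$ (with indices read cyclically, so that $\{N,1\}$ is included), each of which is a circular interval, I conclude that $i$ and $i+1$ are neighbours in the cyclic order of $\pi$ for every $i$.

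It then remains to convert this adjacency information into equivalence of orderings. I would encode the cyclic order of $\pi$ by its adjacency graph on $V$, which is a single Hamiltonian cycle with exactly $N$ edges. The previous paragraph shows that all $N$ edges of the natural cycle through $1,2,\dots,N$ appear among these adjacencies; as both graphs are $2$-regular on the same $N$ vertices (here $N\ge 4$, as assumed, so that each $\{i,i+1\}$ is a proper circular interval and the edge count is clean), they must have identical edge sets and hence be the same cycle. Two cyclic orders with the same adjacency graph differ only by rotation and reflection, so $\pi$ is equivalent to $1,2,\dots,N$. I expect the only delicate point to be the middle step---reading off from Lemma~\ref{lem:complement} that natural neighbours remain neighbours in $\pi$---after which the cycle-counting argument is routine.
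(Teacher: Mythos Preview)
Your argument is correct, with one small slip in the ``if'' direction: you assert that every circular interval $E$ is ``consecutive in the increasing order,'' but a wrap-around interval such as $[y,N]\cup[1,x]$ is not. What is true is that for every circular interval either $E$ or $E^c$ is an ordinary interval and hence consecutive in $1,2,\dots,N$; that is precisely the hypothesis Lemma~\ref{lem:complement}b) needs, so the conclusion stands once you phrase it that way.

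Your route is genuinely different from the paper's. For the ``if'' direction the paper argues directly, without Lemma~\ref{lem:complement}: for a circular interval $S$ and any $x,y\in S$, one of the two arcs between $x$ and $y$ lies entirely in $S$, which already rules out any $ABAB$ pattern involving $S$. For the ``only if'' direction the paper produces an explicit witness: if $\pi$ is not equivalent to the natural order, it picks $x<y<z<q$ whose cyclic order in $\pi$ is wrong and, by a short case analysis, exhibits two disjoint circular intervals forming an $ABAB$ pattern. Your argument instead deduces from Lemma~\ref{lem:complement}a) that every natural pair $\{i,i+1\}$ (cyclically) must be an adjacency in the cyclic order of $\pi$, and then identifies the two $N$-cycles by an edge count. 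The paper's approach is a little shorter and gives a concrete obstruction; yours is more structural, makes the complement lemma do the work, and avoids the four-element case analysis.
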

\begin{proof}
First, consider the order $1,2,\dots, N$  and a circular interval $S$.
If $x,y\in S$ and $x<y$, then either $[x,y]\subset S$ or $[1,x]\cup[y,N]\subset S$, hence $S$ is not part of any $ABAB$ pattern. As this holds for any $S$, the ordering is $ABAB$-free. 

In the other direction, let $\pi$ be an ordering of $V$ not equivalent to $1,2,\ldots,N$. Since $N\ge 4$, we can find $x,y,z,q\in V$ such that $x<y<z<q$, but in $\pi$ their cyclic order is not $x,y,z,q$ nor $q,z,y,x$. 

\begin{itemize}
    \item If the cyclic order in $\pi$ is $x,y,q,z$ or $x,z,q,y$, then $[q,x]$ and $[y,z]$ are disjoint circular intervals forming an $ABAB$ pattern.
    \item If the cyclic order in $\pi$ is $x,z,y,q$ or $x,q,y,z$, then $[x,y]$ and $[z,q]$ are disjoint circular intervals forming an $ABAB$ pattern.
\end{itemize}

Hence, any $ABAB$-free ordering is equivalent to $1,2,\ldots, N$.
    \end{proof}

Let $V=\cup_{i=1}^kA_i$ for some disjoint $A_i\subset V$.
We say that an ordering of $V$ \emph{has structure $A_1,\dots,A_k$} if for any $i\neq j$ and $v\in A_i, w\in A_j$, the element  $v$ comes before $w$ if and only if $i<j$. That is, other than the internal order within the $A_i$-s, the ordering is fixed. 

\begin{observation}\label{obs:structure}
 An ordering $\pi$ has structure $A_1,\dots,A_k$ if and only if for any selection $a_i\in A_i$ the $a_i$-s are in order within $\pi$.
    
\end{observation}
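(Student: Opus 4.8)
The plan is to prove the two implications of the biconditional separately, reading ``the $a_i$-s are in order within $\pi$'' as ``$a_1$ precedes $a_2$ precedes $\cdots$ precedes $a_k$ in $\pi$''.

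For the forward implication I would assume $\pi$ has structure $A_1,\dots,A_k$ and fix an arbitrary selection $a_i\in A_i$. For each pair $i<j$ the elements $a_i\in A_i$ and $a_j\in A_j$ lie in distinct blocks, so the defining property of the structure immediately yields that $a_i$ precedes $a_j$; since this holds for all pairs, the $a_i$-s appear in order. This direction is little more than unwinding the definition.

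For the reverse implication I would assume every selection is in order and verify the structure property directly: given arbitrary indices $i\neq j$, say $i<j$, and arbitrary $v\in A_i$, $w\in A_j$, I must show that $v$ precedes $w$ in $\pi$. The key step is to embed $v$ and $w$ into one common selection by setting $a_i=v$ and $a_j=w$ and choosing any $a_m\in A_m$ for the remaining indices; the hypothesis then forces $a_i=v$ to precede $a_j=w$ because $i<j$. Since $i,j,v,w$ were arbitrary, this establishes the structure.

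The only subtle point---and the sole place any care is needed---is that forming a full selection in the reverse direction requires every $A_m$ to be nonempty. I would therefore state (or tacitly assume) that the blocks $A_i$ are nonempty; otherwise an empty block makes the selection condition vacuously true while leaving the relative order of the remaining blocks unconstrained, and the equivalence breaks. Granting nonemptiness, both directions follow immediately from the definition, and no genuine obstacle remains.
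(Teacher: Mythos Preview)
Your argument is correct. The paper states this as an observation with no accompanying proof, treating the equivalence as immediate from the definition; your two-direction argument is exactly the unwinding one would supply, and your remark that the reverse implication tacitly needs each $A_m$ to be nonempty is accurate (and harmless in context, since the paper only ever applies the observation to the explicitly nonempty blocks $R_i$ and $S_j$).
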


Let $\mathcal{I}_k$ denote the collection of all circular intervals on $\{1,2,\ldots, k\}$.
From Lemma \ref{lem: intervals} and Observation \ref{obs:structure} we obtain the following.

\begin{corollary}\label{cor:circularblocks}
Let $V=\cup_{i=1}^kA_i$ for some disjoint $A_i\subset V$ and let $\EE=\{\bigcup\limits_{i\in\alpha}A_i:\alpha\in\mathcal{I}_k\}$.
Then an ordering $\pi$ of $V$ is $ABAB$-free if and only if $\pi$ is equivalent to an ordering that has structure $A_1,\ldots, A_k$.
\end{corollary}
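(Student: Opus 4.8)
The plan is to reduce Corollary~\ref{cor:circularblocks} to Lemma~\ref{lem: intervals} by means of a quotient (contraction) argument, using Observation~\ref{obs:structure} as the bridge between orderings of $V$ and orderings of the index set $\{1,2,\dots,k\}$. The edge set $\EE$ is defined so that each hyperedge is a union of whole blocks $A_i$; intuitively, this means the hyperedges cannot ``see'' the internal order of any block, only the relative order of the blocks. So I expect the whole statement to collapse to the $k$-element case already handled by Lemma~\ref{lem: intervals}. One technical point to settle first: the statement refers to circular intervals and equivalence, so Observation~\ref{obs:cyclic} guarantees that the property we are proving is invariant under the equivalence relation, which is what lets us speak of ``equivalent to an ordering that has structure $A_1,\dots,A_k$.'' I would also dispose of a degenerate case: if $k<4$ the circular-interval hypergraph on $\{1,\dots,k\}$ may not force a unique order, but since $\EE$ is built from $\mathcal{I}_k$ the reduction still goes through by applying Lemma~\ref{lem: intervals} with $N=k$ (assuming $k\ge 4$, matching the hypothesis there); for small $k$ one checks directly.

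First I would prove the easy direction. Suppose $\pi$ is equivalent to an ordering $\sigma$ that has structure $A_1,\dots,A_k$. By Observation~\ref{obs:cyclic} it suffices to show $\sigma$ is $ABAB$-free. Take any hyperedge $E=\bigcup_{i\in\alpha}A_i$ with $\alpha\in\mathcal{I}_k$, i.e.\ $\alpha$ is a circular interval of $\{1,\dots,k\}$. Because $\sigma$ has structure $A_1,\dots,A_k$, the blocks appear as consecutive runs $A_1,A_2,\dots,A_k$, and a union of a circular interval of blocks is itself a circular interval of the vertex order of $\sigma$. By the first direction in the proof of Lemma~\ref{lem: intervals}, such a set cannot be part of any $ABAB$ pattern, so no pair of hyperedges forms one, and $\sigma$ is $ABAB$-free.

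For the converse, suppose $\pi$ is an $ABAB$-free ordering of $V$. I want to extract from $\pi$ an ordering of the index set $\{1,\dots,k\}$ and show it is $ABAB$-free for $\mathcal{I}_k$, so that Lemma~\ref{lem: intervals} forces it to be equivalent to $1,2,\dots,k$. The natural attempt is to pick one representative $a_i\in A_i$ from each block and read off the order of the $a_i$-s in $\pi$; call the resulting order of indices $\tau$. For any $\alpha\in\mathcal{I}_k$, the representatives $\{a_i:i\in\alpha\}$ are exactly the intersection of the hyperedge $\bigcup_{i\in\alpha}A_i$ with the representative set, so an $ABAB$ pattern among circular intervals in $\tau$ would lift to an $ABAB$ pattern among the corresponding hyperedges in $\pi$, contradicting $ABAB$-freeness of $\pi$. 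Hence $\tau$ is an $ABAB$-free ordering of $\mathcal{I}_k$, and by Lemma~\ref{lem: intervals}, $\tau$ is equivalent to $1,2,\dots,k$.

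The main obstacle, and the step needing care, is showing that once $\tau$ is equivalent to $1,2,\dots,k$ the whole ordering $\pi$ actually has structure $A_1,\dots,A_k$ (up to equivalence) — that is, that the choice of representatives was not hiding an interleaving of blocks. The worry is a configuration where two elements of different blocks, say $v\in A_i$ and $w\in A_j$ with $i<j$, appear in $\pi$ in the ``wrong'' relative order even though the chosen representatives did not. To rule this out I would argue that the conclusion must hold for \emph{every} choice of representatives: if some pair $v\in A_i, w\in A_j$ violated the block order, I would re-run the representative argument picking $a_i=v$ and $a_j=w$, obtaining an ordering $\tau'$ of the indices that again must be equivalent to $1,2,\dots,k$ by Lemma~\ref{lem: intervals}, yet places $i$ and $j$ inconsistently with the other blocks, giving a contradiction (for $k\ge 4$ the order is rigid enough, modulo equivalence, to pin this down; using $k\ge 3$ suffices once one fixes the cyclic/reflection ambiguity). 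Having shown every selection of representatives is in the same order, Observation~\ref{obs:structure} immediately yields that $\pi$ has structure $A_1,\dots,A_k$ up to equivalence, completing the proof.
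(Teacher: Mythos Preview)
Your approach is exactly what the paper intends: it simply says the corollary follows from Lemma~\ref{lem: intervals} and Observation~\ref{obs:structure}, and you correctly assemble those two ingredients. The forward direction is fine.

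The gap is in your last paragraph. The step ``if some pair $v\in A_i$, $w\in A_j$ violated the block order \dots\ obtaining an ordering $\tau'$ \dots\ yet places $i$ and $j$ inconsistently with the other blocks, giving a contradiction'' does not go through: the new $\tau'$ may be a nontrivial cyclic shift of $(1,\dots,k)$, which is still in the equivalence class, so Lemma~\ref{lem: intervals} yields no contradiction. Concretely, take $k=4$ and let $\pi$ order six vertices so that the blocks sit at positions $A_4\ni 1$, $A_1=\{2,3\}$, $A_2=\{4\}$, $A_3=\{5\}$, $A_4\ni 6$. Representatives $2,4,5,6$ give $\tau=(1,2,3,4)$; the ``violating'' pair $v=3\in A_1$, $w=1\in A_4$ (with $i=1<j=4$ but $w$ before $v$) gives, after your swap, $\tau'=(4,1,2,3)$ --- still equivalent. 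Here $\pi$ \emph{is} equivalent to a structured ordering (shift the first vertex to the end), so the corollary holds, but your argument does not establish it. In particular ``every selection of representatives is in the same order'' is false, and Observation~\ref{obs:structure}, which speaks of linear order, cannot be invoked directly. Your parenthetical ``for $k\ge 4$ the order is rigid enough, modulo equivalence'' is precisely the point that fails.

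A clean repair is to first note that for each $i$ both $A_i$ and $A_i^c=\bigcup_{j\ne i}A_j$ lie in $\EE$ (both $\{i\}$ and its complement are circular intervals of $[k]$), so Lemma~\ref{lem:complement}(a) forces each $A_i$ to be a circular interval of $\pi$. Since the $A_i$ partition $V$, at most one of them wraps around; apply a single cyclic shift so that none does. Now every $A_i$ is an honest interval of the shifted $\pi$, hence $\pi$ already has structure $A_{\sigma(1)},\dots,A_{\sigma(k)}$ for some permutation $\sigma$, and one application of your representative argument together with Lemma~\ref{lem: intervals} shows $\sigma$ is equivalent to the identity; then Observation~\ref{obs:structure} applies without ambiguity.
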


Now we are ready to prove Theorem \ref{thm:abab}.

\begin{proof}
Deciding whether a 3-uniform hypergraph has a proper 2-coloring is NP-hard \cite{inproceedings}. We reduce this problem to deciding  $ABAB$-freeness, thus showing the theorem. 

Let $\GG=(V_{\GG},\EE_{\GG})$ be any given 3-uniform hypergraph where $V_{\GG}=\{v_1,v_2,\ldots, v_n\}$ and $\EE_{\GG}=\{e_1,e_2,\ldots, e_m\}$.
We construct a hypergraph $\HH=(V,\EE)$ such that $\GG$ is 2-colorable if and only if $\HH$ is $ABAB$-free.

The vertex set $V$ of $\HH$ is the set of first $N$ natural numbers constructed as follows: for each vertex of $\GG$ we take two vertices, and for each edge of $\GG$ we take four vertices.
That is, let $V_1=\bigcup_{i=1}^n R_i$ where $R_i=\{2i-1,2i\}$ and let $V_2=\bigcup_{j=1}^m S_j$, where $S_j=\{2n+(4j-3),2n+(4j-2),2n+(4j-1),2n+4j\}$. The vertex set of $\HH$ is $V=V_1\cup V_2$. We will denote by $t_j$ the first vertex of $S_j$, that is, $t_j=2n+(4j-3)$ and $S_j=\{t_j,t_j+1,t_j+2,t_j+3\}$.

Now, we will construct the sets of hyperedges in three steps. $\EE_1$ is the set of circular intervals of $[N]$ that can be written as the unions of $R_i$-s and $S_j$-s.
Corollary \ref{cor:circularblocks} implies that any $ABAB$-free ordering of $(V,\EE_1)$ is equivalent to a unique $ABAB$-free ordering that has structure $R_1,\dots,R_n,S_1,\dots,S_m$. 

Our next goal is to construct a set of edges $\EE_2$, such that any $ABAB$-free ordering of $(V,\EE_1\cup \EE_2)$ corresponds to a 2-coloring of the vertex set $V_\GG$,
in such a way, that the color of a vertex $v_i$ can be determined from the order within $R_i$, and furthermore, if $v_i\in e_j$, then the color can also be determined from the order within $S_j$. 
Take an $ABAB$-free ordering $\pi_1$ of $(V,\EE_1)$, and let $\pi_2$ be the unique $ABAB$-free ordering that is equivalent to $\pi_1$ and has structure $R_1,\dots,R_n,S_1,\dots,S_m$.
We will say that the color of $v_i$ is decided based on the order of $2i-1$ and $2i$ in $\pi_2$.
We color $v_i$ red if $2i-1$ precedes $2i$ in $\pi_2$, and color it blue otherwise. 

Consider an edge $e_j=\{v_i,v_k,v_l\}$ with $i<k<l$ and the corresponding set $S_j=\{t_j,t_j+1,t_j+2,t_j+3\}$.
Our goal is to ensure that the order of $t_j$ and $t_j+1$ in $\pi_2$ represents the color of $v_l$, the order of $t_j+1$ and $t_j+2$ represents the color of $v_k$, and the order of $t_j+2$ and $t_j+3$ represents the color of $v_i$.
We emphasize that $i<k<l$, that is, the pairs $(t_j,t_j+1)$, $(t_j+1,t_j+2)$, $(t_j+2,t_j+3)$ represent the color of the vertices of the edge in reverse order. For example, the orders $(t_j,t_j+1,t_j+2,t_j+3)$ and $(t_j+3,t_j+2,t_j+1,t_j)$ correspond to the all red and all blue colorings of the vertices in edge $e_j$, respectively.       

To achieve this, for each $v_i$ and edge $e_j$ where $v_i$ is the $r$-th smallest vertex in $e_j$  we add the edges $I_{i,j,r}=\{2i,\dots,t_j+(3-r)\}$ and $I'_{i,j,r}=\{2i-1,2i+1,2i+2,\dots,t_j+(3-r)-1,t_j+(3-r)+1\}$ to $\EE_2$.
Now $I_{i,j,r}$ and $I'_{i,j,r}$ differ only on 4 elements, $2i-1,2i,t_j+(3-r),t_j+(3-r)+1$, and the structure forced by $\EE_1$ ensures that the first two always come sooner in $\pi_2$ than the second two.
Edges $I_{i,j,r}$ and $I'_{i,j,r}$ enforce that in any $ABAB$-free ordering $2i-1$ precedes $2i$ if and only if $t_j+(3-r)$ precedes $t_j+(3-r)+1$.
See Table \ref{tab:edges} for the six edges corresponding to a fixed $e_j=\{v_i,v_k,v_l\}$.

\begin{table}[!h]
\centering
\begin{tabular}{|l|ll|c|ll|c|ll|c|llll|}
\hline

 & \multicolumn{2}{c|}{$R_i$} & \multicolumn{1}{c|}{$\dots$} & \multicolumn{2}{c|}{$R_k$} & \multicolumn{1}{c|}{$\dots$} & \multicolumn{2}{c|}{$R_l$} & \multicolumn{1}{c|}{$\dots$} & \multicolumn{4}{c|}{$S_j$}                                                   \\ \hline
 $I_{i,j,1}$ & \multicolumn{1}{l|}{}  & \textbf{x} & x                            & \multicolumn{1}{l|}{x} & x & x                            & \multicolumn{1}{l|}{x} & x & x                            & \multicolumn{1}{l|}{x} & \multicolumn{1}{l|}{x} & \multicolumn{1}{l|}{\textbf{x}} &   \\ \hline
 $I'_{i,j,1}$ & \multicolumn{1}{l|}{\textbf{x}} &   & x                            & \multicolumn{1}{l|}{x} & x & x                            & \multicolumn{1}{l|}{x} &  x & x                            & \multicolumn{1}{l|}{x} & \multicolumn{1}{l|}{x} & \multicolumn{1}{l|}{}  & \textbf{x} \\ \hline
 $I_{k,j,2}$ & \multicolumn{1}{l|}{}  &   &                              & \multicolumn{1}{l|}{}  & \textbf{x} & x                            & \multicolumn{1}{l|}{x} & x & x                            & \multicolumn{1}{l|}{x} & \multicolumn{1}{l|}{\textbf{x}} & \multicolumn{1}{l|}{}  &   \\ \hline
$I'_{k,j,2}$ & \multicolumn{1}{l|}{}  &   &                              & \multicolumn{1}{l|}{\textbf{x}} &   & x                            & \multicolumn{1}{l|}{x} & x & x                            & \multicolumn{1}{l|}{x} & \multicolumn{1}{l|}{}  & \multicolumn{1}{l|}{\textbf{x}} &   \\ \hline
 
 $I_{l,j,3}$ & \multicolumn{1}{l|}{}  &   &                              & \multicolumn{1}{l|}{}  &   &                              & \multicolumn{1}{l|}{}  & \textbf{x} & x                            & \multicolumn{1}{l|}{\textbf{x}} & \multicolumn{1}{l|}{}  & \multicolumn{1}{l|}{}  &   \\ \hline
 $I'_{l,j,3}$ & \multicolumn{1}{l|}{}  &   &                              & \multicolumn{1}{l|}{}  &   &                              & \multicolumn{1}{l|}{\textbf{x}} &   & x                            & \multicolumn{1}{l|}{}  & \multicolumn{1}{l|}{\textbf{x}} & \multicolumn{1}{l|}{}  &   \\ \hline
\end{tabular}
\caption{Six edges of $\HH$ in $\EE_2$ corresponding to the edge $e_j=\{v_i,v_k,v_l\}$ of $\GG$.}
\label{tab:edges}
\end{table}

So any $ABAB$-free ordering of $(V,\EE_1\cup \EE_2)$ corresponds to a unique coloring of $\GG$, but the coloring might not be proper. For later use we note the following. 

\begin{observation}\label{obs:almostinterval}
 Let $E\in \EE_1\cup \EE_2$ and let $D_1$ be the first and $D_2$ the last of the sets $R_1,\dots, R_n,S_1,\dots,S_m$ that intersects $E$. Then the sets in the list $R_1,\dots, R_n,S_1,\dots,S_m$ between $D_1$ and $D_2$ are subsets of $E$.    
\end{observation}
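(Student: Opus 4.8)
The plan is to verify the claim separately for $E\in\EE_2$ and $E\in\EE_1$, exploiting that the blocks $R_1,\dots,R_n,S_1,\dots,S_m$ partition $\{1,\dots,N\}$ into consecutive intervals of the natural order (recall $R_i=\{2i-1,2i\}$, and $S_j$ occupies the four positions $t_j,\dots,t_j+3$, with every $R_i$ preceding every $S_j$). A block therefore meets an edge exactly when its interval of positions does, and ``the blocks between $D_1$ and $D_2$'' are precisely those whose positions lie strictly between the position-ranges of $D_1$ and $D_2$; so the whole statement reduces to an elementary computation on intervals of integers.

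For $E\in\EE_2$ I would treat $I_{i,j,r}$ and $I'_{i,j,r}$ directly. The edge $I_{i,j,r}=[2i,\,t_j+(3-r)]$ is a single contiguous interval whose least element $2i$ lies in $R_i$ and whose greatest element $t_j+(3-r)$ lies in $S_j$, so $D_1=R_i$ and $D_2=S_j$; each intermediate block $R_{i+1},\dots,R_n,S_1,\dots,S_{j-1}$ has all of its positions inside this interval and is hence a subset of $E$. For $I'_{i,j,r}$ the point to record is that it is the contiguous interval $[2i-1,\,t_j+(3-r)+1]$ with exactly the two elements $2i$ and $t_j+(3-r)$ deleted; since $2i\in R_i=D_1$ and $t_j+(3-r)\in S_j=D_2$, both deleted positions sit in the two boundary blocks, leaving every intermediate block entirely inside $I'_{i,j,r}$. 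This settles the $\EE_2$ case.

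For $E\in\EE_1$ the argument is even shorter: by definition $E$ is a union of whole blocks, so every block meeting $E$ is already a subset of $E$ and no block is partially covered. When $E$ is an ordinary interval, the blocks it meets form a consecutive run $D_1,\dots,D_2$ and the claim is immediate. The one case that needs care, and which I expect to be the main obstacle, is a wraparound circular interval containing both $1$ and $N$: there $R_1$ and $S_m$ are the first and last blocks of the list that meet $E$, yet the blocks lying linearly between them need not be contained in $E$. My plan is to resolve this by reading ``between $D_1$ and $D_2$'' along the circular arc that defines $E$ rather than along the linear list $R_1,\dots,S_m$; with this (intended) cyclic convention $E$ is again a contiguous arc of whole blocks, and the containment holds verbatim. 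Beyond pinning down this wraparound convention correctly, everything else is a routine interval check.
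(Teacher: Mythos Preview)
The paper gives no proof of this observation; it is asserted inside the proof of Theorem~\ref{thm:abab} and later invoked in Theorem~\ref{thm:ababa}. Your direct verification for $\EE_2$ is exactly the intended one: $I_{i,j,r}$ is a genuine interval with endpoints in $R_i$ and $S_j$, and $I'_{i,j,r}$ differs from a slightly larger interval only at the two positions $2i\in R_i$ and $t_j+(3-r)\in S_j$, so every intermediate block is fully contained. For non-wraparound members of $\EE_1$ your argument is likewise correct and routine.

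Where your proposal runs into trouble is the wraparound case in $\EE_1$. You are right that the observation, read literally with ``first'' and ``last'' taken in the linear list $R_1,\dots,R_n,S_1,\dots,S_m$, fails for a circular interval such as $E=S_m\cup R_1$: here $D_1=R_1$, $D_2=S_m$, yet none of the intervening blocks lie in $E$. Your proposed fix of reading ``between $D_1$ and $D_2$'' along the circular arc of $E$ does not salvage the statement as written, and more importantly it does not match how the observation is applied in Theorem~\ref{thm:ababa}, where $D_1$ and $D_2$ are explicitly the leftmost and rightmost blocks in the linear structure $R_1,\dots,S_m,\{N+1\}$ of $\pi'$ and ``between'' is unambiguously linear. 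So the cyclic reinterpretation is not the right repair.

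What is actually going on is a small imprecision in the paper: the observation holds only for $\EE_2$ and for the non-wraparound members of $\EE_1$. In the one place it is used, a wraparound $L\in\EE_1$ is a union of whole blocks and hence a $2$-interval with respect to $\pi'$; since $H\in\EE_0$ is also a $2$-interval there, such an $L$ cannot supply the three separated $A$-positions of an $ABABA$ pattern, and that case is disposed of without the observation. Your diagnosis of the obstacle is correct; the remedy is to restrict the scope of the claim rather than to bend the meaning of ``between''.
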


Now, we add some edges $\EE_3$ to ensure that each $ABAB$-free ordering corresponds to a proper 2-coloring.
For each $S_j=\{t_j,t_j+1,t_j+2,t_j+3\}$ add $\{t_j,t_j+3\}$ and $\{t_j,t_j+3\}^c$ to $\EE_3$.
By Lemma \ref{lem:complement}, the possible orderings of $S_j$ in an $ABAB$-free ordering of $(V,\EE_1\cup\EE_2\cup\EE_3)$ are the ones where $t_j$ and $t_j+3$ are neighbors.
These do not include the orders that correspond to monochromatic edges, namely the orders $(t_j,t_j+1,t_j+2,t_j+3)$ and $(t_j+3,t_j+2,t_j+1,t_j)$.
Hence, each $ABAB$-free ordering of $(V,\EE_1\cup\EE_2\cup\EE_3)$ corresponds to a proper coloring. 

We are left with one task, we have to show that for each proper 2-coloring there is an $ABAB$-free ordering.
We take the vertices in increasing order, and then within each $R_i$ and $S_i$, we reorder the vertices. This way the resulting ordering $\pi$ will automatically have structure $R_1,\dots,R_n,S_1,\dots,S_m$.
Order an $R_i$ as $2i-1,2i$ if $v_i$ is red and as $2i,2i-1$ if $v_i$ is blue.
Order an $S_j=\{t_j,t_j+1,t_j+2,t_j+3\}$ that corresponds to $e_j=\{v_i,v_k,v_l\}$ with $i<k<l$ based on the colors of $v_i,v_k,v_l$ according to Table \ref{tab:ordering}. 

\begin{table}[!h]
\centering
\begin{tabular}{|l|l|}
\hline
colors  of $v_i,v_k,v_l$  & order in $S_j$\\
\hline
\hline
red, red, blue & $t_j+1, t_j+2, t_j+3, t_j $  \\
\hline
red, blue, red & $t_j+2$, $t_j+3$, $t_j$, $t_j+1$                  \\
\hline
red, blue, blue &  $t_j+2, t_j+1, t_j, t_j+3$,                  \\
\hline
blue, red, red &   $t_j+3, t_j, t_j+1, t_j+2$               \\
\hline
blue, red, blue &   $t_j+1$, $t_j$, $t_j+3$, $t_j+2$                 \\
\hline
blue, blue, red&  $t_j, t_j+3 ,t_j+2, t_j+1, $                \\
\hline
                  
\end{tabular}
\caption{Ordering rule for the $S_j$-s.}
\label{tab:ordering}
\end{table}

We can see in Table \ref{tab:ordering} that in each case the pairs $(t_j,t_j+1)$, $(t_j+1,t_j+2)$ and $(t_j+2,t_j+3)$ represent the colors of $v_l, v_k,$ and $ v_i$ correctly.
Hence this ordering indeed represents the desired coloring. 

Now we will show that $\pi$ is indeed an $ABAB$-free ordering.
Since the ordering has the structure $R_1,\dots,R_n,S_1,\dots,S_m$, part b) of Lemma \ref{lem:complement} tells us that the edges of $\EE_1$ cannot take part in an $ABAB$ pattern.
Similarly, as in each $S_j=\{t_j, t_j+1, t_j+2, t_j+3\}$ we placed $t_j$ adjacent to $t_j+3$, the edges in $\EE_3$ cannot form an $ABAB$ pattern with any edge.

Hence, we only have to check whether two edges from $\EE_2$ form an ABAB pattern.
First consider the case when we have two edges corresponding to the same $e_j=\{v_i,v_k,v_l\}$, that is, two rows from Table \ref{tab:edges}. The edge pairs ($I_{i,j,1}$, $I'_{i,j,1}$), ($I_{k,j,2}$, $I'_{k,j,2}$), ($I_{l,j,3}$, $I'_{l,j,3}$) will not form $ABAB$ patterns since the ordering represents the coloring correctly.
Further, let $H\in\{I_{i,j,1},I'_{i,j,1}\},\;K\in\{I_{k,j,2},I'_{k,j,2}\}$, and $L\in\{I_{l,j,3},I'_{l,j,3}\}$.
Then $H\supset K\supset  L$.
Thus, no two edges from $\{I_{i,j,1}, I'_{i,j,1}, I_{k,j,2}, I'_{k,j,2}, I_{l,j,3}, I'_{l,j,3}\}$ can form an $ABAB$ pattern.

Secondly, suppose $H\in \{I_{i_1,j_1,r_1}, I'_{i_1,j_1,r_1}\}$ and $L\in \{I_{i_2,j_2,r_2}, I'_{i_2,j_2,r_2} \}$ where $j_1<j_2$ and suppose that $H$ and $L$ form an $ABAB$ pattern on vertices $X=(x_1,x_2,x_3,x_4)$.
The vertices between $R_{max(i_1,i_2)}$ and $S_{j_1}$ belong to both $H$ and $L$, hence they cannot be in $X$.
Since $j_1<j_2$, the vertices of $S_{j_1}$ and later vertices cannot be in $H\setminus L$, so we have at most one vertex of $X$ there.
Now consider three cases. If $i_1=i_2$ we have only two remaining vertices that we can put in $X$, the vertices $2i_1-1$ and $2i_1$, but two vertices are not enough.
If $i_1<i_2$, then the vertices in $R_{i_2}$ and before that cannot belong to $L\setminus H$, so we have at most one vertex of $X$ there, which is not enough.
Similarly, if $i_2<i_1$ the vertices in $R_{i_1}$ and before that cannot belong to $H\setminus L$, so we have at most one vertex of $X$ there, which is not enough. 

Therefore the ordering $\pi$ is $ABAB$-free, and the proof is complete. 
\end{proof}

\section{$ABABA$-freeness is NP-complete}\label{sec:ababa}
In this section we show the following special case of Theorem \ref{thm:odd}.

\begin{theorem}\label{thm:ababa}
It is NP-hard to decide if a given hypergraph is $ABABA$-free.
\end{theorem}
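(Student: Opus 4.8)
The plan is to reduce from the same NP-hard problem used for the $ABAB$ case, namely $2$-colorability of $3$-uniform hypergraphs, and to adapt the gadget construction from the proof of Theorem \ref{thm:abab} to the $ABABA$ setting. The overall architecture should mirror the even case: given a $3$-uniform hypergraph $\GG=(V_\GG,\EE_\GG)$, I would build a hypergraph $\HH=(V,\EE)$ whose vertex set again consists of a pair $R_i=\{2i-1,2i\}$ for each vertex $v_i$ of $\GG$ and a block $S_j$ for each edge $e_j$, so that $ABABA$-free orderings of $\HH$ are in bijection with colorings of $\GG$, with proper colorings corresponding exactly to the valid orderings. The key difference is that an $ABABA$ pattern requires five alternating witnesses rather than four, so a single pair of complementary sets no longer pins down the cyclic structure. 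Consequently I expect to need analogues of Lemma \ref{lem:complement} and Corollary \ref{cor:circularblocks} stated for $ABABA$-freeness, where the forbidden pattern is $(AB)^2A$; in particular the block-structure-enforcing family $\EE_1$ of circular intervals must be strengthened or supplemented so that $(AB)^2A$-free orderings still collapse to (an equivalence class of) the intended structure $R_1,\dots,R_n,S_1,\dots,S_m$.

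The main obstacle, and the step I would tackle first, is recovering the rigidity that Lemma \ref{lem: intervals} and Corollary \ref{cor:circularblocks} gave us for free. Circular intervals alone are too weak against the longer pattern: two circular intervals never produce an $ABABA$ pattern regardless of the ordering, since each contributes at most a run of $A$'s then $B$'s then $A$'s in cyclic terms, which is not enough alternations to force $v_1,v_2,v_3,v_4,v_5$ with three in one set and two in the other. So I would instead design a richer family of sets — plausibly using $2$-intervals, or pairs consisting of a set together with something slightly more intricate than its complement — whose $(AB)^2A$-free orderings are exactly the cyclic (or linear) block orderings. A natural route is to take each $R_i$ and $S_j$ and attach a few auxiliary "anchor" vertices whose role is to break the extra symmetry that the weaker pattern permits, thereby restoring a lemma of the form ``$S$ and $S^c$ (or $S$ and a designated partner) cannot both be non-consecutive in a valid ordering.'' Getting this foundational rigidity lemma right is where essentially all the difficulty lies.

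Once the structure $R_1,\dots,R_n,S_1,\dots,S_m$ is enforced, the remaining machinery should carry over with the same flavor as before. I would reuse the idea that the internal order of $R_i$ encodes the color of $v_i$ and that, via pairs of nearly-identical sets differing in exactly four places (the $I_{i,j,r},I'_{i,j,r}$ gadgets of $\EE_2$), the internal order of each $S_j$ is forced to read off the colors of the three vertices of $e_j$; these equality constraints are local and should remain effective against $ABABA$ patterns because the relevant witnesses still live in just two consecutive blocks. The propriety constraints of $\EE_3$ — forbidding the monochromatic orderings of each $S_j$ — would be reformulated using the new rigidity lemma in place of Lemma \ref{lem:complement}. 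The converse direction, exhibiting an explicit $ABABA$-free ordering from a proper coloring, should follow the same case analysis as in Theorem \ref{thm:abab}: after fixing the block structure, one checks that no two gadget edges can accumulate the five alternations required, using nesting ($H\supset K\supset L$) for gadgets of a common edge and a counting argument across blocks for gadgets of distinct edges. I anticipate that the counting becomes slightly more delicate for five alternations than for four, but it should remain a routine, if longer, verification once the structural lemmas are in place.
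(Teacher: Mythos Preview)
Your proposal has a concrete gap at the $\EE_2$ step. The gadget pairs $I_{i,j,r},I'_{i,j,r}$ from the $ABAB$ proof have symmetric difference of size exactly four: $|I_{i,j,r}\setminus I'_{i,j,r}|=|I'_{i,j,r}\setminus I_{i,j,r}|=2$. An $ABABA$ pattern requires three witnesses on one side and two on the other, so symmetric difference at least five. Hence $I_{i,j,r}$ and $I'_{i,j,r}$ can \emph{never} form an $ABABA$ pattern, under any ordering whatsoever, and in the $ABABA$ setting these pairs impose no constraint on the relative order of $(2i-1,2i)$ versus $(t_j+(3-r),t_j+(3-r)+1)$. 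The color-transfer mechanism linking $R_i$ to $S_j$ therefore collapses entirely --- not because there are too many alternations to rule out in the converse direction, but because there are too few to ever trigger the constraint in the forward direction. Fixing this requires genuinely new gadgets with symmetric difference at least five that still encode a single binary choice; this is not the routine adaptation you describe.

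The paper avoids this problem by not rebuilding the gadgets at all. It reduces from $ABAB$-freeness itself: take the hypergraph $\HH=(V,\EE)$ already constructed in Theorem~\ref{thm:abab}, adjoin one new vertex $N+1$, and form $\HH'$ on $[N+1]$ with edges $\EE\cup\{E\cup\{N+1\}:E\in\EE\}\cup\EE_0$, where $\EE_0$ consists of all $2$-intervals that are unions of the blocks $R_1,\dots,R_n,S_1,\dots,S_m,\{N+1\}$. Your instinct about $2$-intervals is correct --- Lemma~\ref{lem: two intervals} and Corollary~\ref{cor:2intervals} show this family forces the block structure linearly (not merely cyclically), with $N+1$ at an end. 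The key idea is then that an $ABAB$ pattern between $H,L\in\EE$ extends to an $ABABA$ pattern between $H\cup\{N+1\}$ and $L$ once $N+1$ sits last, and conversely every $ABABA$ pattern in $\HH'$ restricts to an $ABAB$ pattern in $\HH$. This ``append one vertex and one copy of each edge'' trick is what you are missing.
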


We start with a lemma similar to Lemma \ref{lem: intervals}. Note that Observation \ref{obs:cyclic} does not hold in the $ABABA$-free setting, but an ordering is $ABABA$-free if and only if the reverse of the ordering is $ABABA$-free.

\begin{lemma}\label{lem: two intervals}
Let $V=\{1,2,\ldots, N\}$ where $N\ge 7$ and let $\EE$ be the set of all 2-intervals of $V$.
Then the hypergraph $\HH=(V,\EE)$ is $ABABA$-free if and only if the ordering of $V$ is ${1,2,\ldots, N}$ or ${N,N-1,\ldots, 1}$.
\end{lemma}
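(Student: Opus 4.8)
The plan is to prove both directions. For the easy direction, I would take the ordering $1,2,\ldots,N$ (the reverse case is symmetric) and show that no two $2$-intervals $A$ and $B$ form an $ABABA$ pattern. An $ABABA$ pattern needs positions $a_1<b_1<a_2<b_2<a_3$ with the $a$'s in $A\setminus B$ and the $b$'s in $B\setminus A$. This means reading along the ordering we see the membership alternate $A,B,A,B,A$, which requires $A\setminus B$ to split into at least three ``blocks'' separated by elements of $B\setminus A$. I would argue that since $A$ is a $2$-interval, $A\setminus B$ can consist of at most a few maximal runs, and a careful count of how a pattern of five alternations partitions the line forces one of $A$ or $B$ to be the union of at least three intervals, contradicting the $2$-interval assumption. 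The key quantitative fact is that an $ABABA$ alternation of length $5$ certifies that one of the two sets meets three disjoint intervals in the ordering.

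For the hard direction, let $\pi$ be an ordering of $V$ not equal to $1,2,\ldots,N$ nor its reverse; I must exhibit two $2$-intervals forming an $ABABA$ pattern. This is the analogue of the second half of Lemma~\ref{lem: intervals}, but with $2$-intervals instead of circular intervals and with five alternations instead of four, which is why the bound jumps from $N\ge 4$ to $N\ge 7$. I would look for a small configuration of elements whose relative order in $\pi$ deviates from both monotone orders, and then build two $2$-intervals (each a union of two genuine intervals of $[N]$) that realize the $ABABA$ pattern on those elements. Concretely, the extra freedom of $2$-intervals (over plain intervals) lets me wrap around and pick up a fifth alternation, which is exactly what distinguishes this from the four-alternation circular-interval lemma.

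The main obstacle is the case analysis in the hard direction: unlike Lemma~\ref{lem: intervals}, where cyclic equivalence (Observation~\ref{obs:cyclic}) collapses many cases, here we only have reversal symmetry, so there are more orderings to rule out and the requirement $N\ge 7$ suggests we need enough room to always locate seven suitably-ordered witnesses. I would try to reduce to a canonical ``defect'' — a quadruple or quintuple of indices appearing in a non-monotone cyclic-free order — and for each such defect explicitly name the two $2$-intervals and verify the five alternating positions. Verifying that each proposed pair really is a pair of $2$-intervals (each using at most two runs of consecutive integers) and that the five witnesses alternate correctly is routine but must be done for every case; this bookkeeping, rather than any single clever idea, is where the work lies.
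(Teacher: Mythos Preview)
Your easy direction is fine and in fact more explicit than the paper, which just says ``it is easy to see.'' The clean way to finish that sketch is: in the natural order, the witnesses $a_1<b_1<a_2<b_2<a_3$ have $b_1,b_2\notin A$, so $a_1,a_2,a_3$ lie in three distinct maximal runs of consecutive integers of $A$, contradicting that $A$ is a $2$-interval. No reference to $B$ is needed.

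For the hard direction your plan diverges sharply from the paper and is much heavier than necessary. You anticipate a case analysis on ``defective'' quadruples/quintuples, but the paper avoids any case split by a single observation: if $\pi$ is neither increasing nor decreasing, then some consecutive pair $x,x{+}1$ is \emph{not} adjacent in $\pi$. The bound $N\ge 7$ is used only here: $x$ and $x{+}1$ together have at most four $\pi$-neighbours, so among the remaining $N-2\ge 5$ vertices there is some $y$ adjacent in $\pi$ to neither. Now $A=\{x,x{+}1,y\}$ is automatically a $2$-interval of $[N]$ (the pair $\{x,x{+}1\}$ is one interval, $\{y\}$ the other), and since $x,x{+}1,y$ are pairwise non-adjacent in $\pi$, one can pick separators $z,q$ between them; then $B=\{z,q\}$ is a $2$-interval and $A,B$ form $ABABA$. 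So the role of $N\ge 7$ is not to ``locate seven suitably-ordered witnesses'' as you guessed, but merely to guarantee the third element $y$; only five witnesses are ever used. Your case-analysis route could presumably be pushed through, but it buys nothing and you have not actually carried it out, whereas the paper's argument is three lines.
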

\begin{proof}
It is easy to see that the orderings ${1,2,\ldots, N}$ and ${N,N-1,\ldots, 1}$ are indeed $ABABA$-free. Let $\pi$ be any linear ordering of $[N]$ other than $1,2,\ldots,N$ and $N,N-1,\ldots,1$. Then there must be an $x\in [N]$ such that $x$ and $x+1$ are not neighbours in $\pi$. Since $N\ge 7$, there is a $y\in[N]$ such that $y$ is not a neighbour of $x$ or $x+1$. As $x, x+1$ and $y$ are not neighbours in $\pi$, we can pick numbers $z$ and $q$ that separate them from each other in the order $\pi$.  Then $\{x,x+1,y\}$ is a 2-interval which forms an $ABABA$ pattern with the 2-interval $\{z,q\}$.
\end{proof}

Let $\mathcal{J}_k$ denote the collection of all intervals on $\{1,2,\ldots,k\}$.
Analogously to Corollary \ref{cor:circularblocks}, Lemma \ref{lem: two intervals} implies the following.

\begin{corollary}\label{cor:2intervals}
Suppose $k\ge 7$ and $V=\cup_{i=1}^kA_i$ for some disjoint $A_i\subset V$. Let $\EE=\{(\bigcup\limits_{i\in\alpha}A_i)\cup(\bigcup\limits_{j\in\beta}A_j):\alpha,\beta\in\mathcal{J}_k\}$.
Then, an ordering $\pi$ of $V$ is $ABABA$-free if and only if $\pi$ has structure $A_1,A_2,\ldots, A_k$ or $A_k,A_{k-1},\ldots, A_1$.
\end{corollary}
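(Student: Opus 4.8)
The plan is to follow the same template that yielded Corollary~\ref{cor:circularblocks}, substituting ordinary $2$-intervals for circular intervals and replacing the ``equivalent to'' bookkeeping by the two-fold choice between the increasing and the decreasing block order. The feature that makes the contraction work is that every $E\in\EE$ is a union of \emph{whole} blocks, so membership in $E$ is constant on each $A_i$. This lets me pass freely between $V$ and the contracted ground set $\{1,\dots,k\}$, whose ``$i$-th point'' represents the block $A_i$ and on which the edges of $\EE$ become exactly all $2$-intervals of $\{1,\dots,k\}$.

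For the direction ``structure $\Rightarrow$ $ABABA$-free'', I would assume $\pi$ has structure $A_1,\dots,A_k$ and suppose, for contradiction, that some $H,L\in\EE$ and vertices $x_1,\dots,x_5$ (in this $\pi$-order) form an $ABABA$ pattern, with $x_1,x_3,x_5\in H\setminus L$ and $x_2,x_4\in L\setminus H$. Writing $c_s$ for the index of the block containing $x_s$, the structure of $\pi$ forces $c_1\le c_2\le\dots\le c_5$; and since consecutive $x_s$ have opposite $(H,L)$-membership they lie in distinct blocks, so in fact $c_1<c_2<c_3<c_4<c_5$. As edges are constant on blocks, the indices $c_1,c_3,c_5$ then belong to the $H$-part of the edges but not the $L$-part, and $c_2,c_4$ vice versa, so $c_1,\dots,c_5$ witness an $ABABA$ pattern of two $2$-intervals in the order $1,2,\dots,k$, contradicting Lemma~\ref{lem: two intervals} (applicable since $k\ge7$). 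The reverse structure $A_k,\dots,A_1$ is handled identically using the order $k,\dots,1$, which is likewise $ABABA$-free.

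For the converse I would fix any choice of representatives $a_i\in A_i$ and restrict $\pi$ together with $\EE$ to $\{a_1,\dots,a_k\}$. Restriction never creates an $ABABA$ pattern, because the same five vertices would witness one in the full hypergraph, so the restricted ordering is $ABABA$-free; and the traces of the edges onto $\{a_1,\dots,a_k\}$ are precisely all $2$-intervals of the $k$ representatives. Hence Lemma~\ref{lem: two intervals} tells me that, for each fixed selection, the representatives appear in $\pi$ either in increasing or in decreasing block order.

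The remaining, and I expect hardest, step is to glue these per-selection statements into a single global orientation: a priori some selections could be increasing while others are decreasing, and only after ruling this out does Observation~\ref{obs:structure} apply. I would argue by changing one representative at a time. Any two selections differing in a single coordinate share $k-1\ge6$ representatives, and picking two of the shared ones (which exist since $k\ge3$) fixes a pair whose relative $\pi$-order and whose block indices are common to both selections; this pair alone decides whether a selection is increasing or decreasing, so the two selections must agree. Since the set of selections $\prod_i A_i$ is connected under single-coordinate moves, all selections share one orientation. Thus either every selection is in increasing order or every selection is in decreasing order, and Observation~\ref{obs:structure} yields that $\pi$ has structure $A_1,\dots,A_k$ or $A_k,\dots,A_1$, completing the proof.
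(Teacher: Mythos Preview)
Your argument is correct and follows exactly the route the paper intends: the paper simply says the corollary follows from Lemma~\ref{lem: two intervals} and Observation~\ref{obs:structure}, ``analogously to Corollary~\ref{cor:circularblocks}'', and your write-up is the natural unpacking of that analogy. The orientation-consistency step you single out as ``hardest'' is indeed the extra wrinkle over the $ABAB$ case, but your one-coordinate-at-a-time argument handles it cleanly; the paper leaves this implicit.
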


The proof of Theorem \ref{thm:ababa} follows the same ideas as the proof of Theorem \ref{thm:abab}.
There might be an easy reduction directly from $ABAB$-freeness, but we are not aware of it and give an explicit proof below.

\begin{proof}
Once again, we will use that finding 2-colorings for 3-uniform hypergraphs is NP-hard. Let $\GG$ be a 3-uniform hypergraph and let $\HH=(V,\EE)$ be constructed the same way as in the proof of Theorem \ref{thm:abab}. As we have seen, $\GG$ is 2-colorable if and only if $\HH$ is $ABAB$-free. We will construct a hypergraph $\HH'$ such that $\HH$ is $ABAB$ free if and only if $\HH'$ is $ABABA$-free. 

Recall that the vertex set $V$ of $\HH$ is $[N]$ and $V=(\bigcup_{i=1}^n R_i)\cup(\bigcup_{j=1}^m S_j)$. We will build $\HH'$ on the vertex set $[N+1]$. Let $\mathcal{I}$ be the collection of intervals on $[N+1]$ that are unions of some of the sets from $R_1,\dots,R_n,S_1,\dots,S_m,\{N+1\}$.
Let $\EE_0$ be the 2-intervals formed from intervals in $\mathcal{I}$, that is $\EE_0=\{I\cup J:\;I,J\in\mathcal{I}\}$. Note that by Corollary \ref{cor:2intervals}, if the edge set of a hypergraph on $[N+1]$ contains $\EE_0$, then any $ABABA$-free ordering will have structure $R_1,\dots,R_n,S_1,\dots,S_m,\{N+1\}$ or $\{N+1\},S_m,\dots,S_1,R_n,\dots,R_1$.

Consider the hypergraph $\HH'=(V',\EE')$, where $V'=[N+1]$ and $\EE'=\EE\cup\EE_+\cup\EE_0$ with $\EE_+=\{E\cup \{N+1\}:\;E\in\EE\}$. We need to show that $\HH$ is $ABAB$-free if and only if $\HH'$ is $ABABA$ free. First, let $\pi$ be any $ABAB$-free ordering of $[N]$ for $\HH$. Recall that for any $ABAB$-free ordering $\pi$ of $\HH$ there is an equivalent $ABAB$-free ordering $\pi_2$ that has structure $R_1,\dots,R_n,S_1,\dots,S_m$.
 We will show that the ordering $\pi'$ that we get by placing $N+1$ after $\pi_2$ is an $ABABA$-free ordering for $\HH'$.

As no two edges from $\EE$ form an $ABAB$ pattern and we introduced only a single new vertex, no two edges from $\EE\cup \EE_+$ can form an $ABABA$ pattern. Hence, if $H$ and $L$ form an $ABABA$ pattern, we can assume that $H\in \EE_0$. Then, as $\pi'$ has structure $R_1,\dots,R_n,S_1,\dots,S_m,\{N+1\}$, the edge $H$ is not only a 2-interval with respect to the usual ordering, but also with respect to $\pi'$. Hence, the only possibility is that $H$ plays the role of $B$ in the pattern. This also implies that $L$ is from $\EE\cup \EE_+$. 

Let $D_1$ be the first and $D_2$ the last of the sets $R_1,\dots, R_n,S_1,\dots,S_m,\{ N+1\}$  that intersects $L$. Since $H$ and $L$ form an $ABABA$ pattern and $H$ is a 2-interval, $H$ must lie between $D_1$ and $D_2$. Recall that $\EE=\EE_1\cup\EE_2\cup\EE_3$ and from Observation \ref{obs:almostinterval} if $L\in \EE_1\cup \EE_2$, then the sets in the list $R_1,\dots, R_n,S_1,\dots,S_m$ between $D_1$ and $D_2$ are subsets of $L$. This would imply that $H\subset L$, a contradiction. So we must have $L\in \EE_3\cup\EE_+$. From Lemma \ref{lem:complement} we know that an edge from $\EE_3$ cannot form an $ABAB$-pattern with anything. So, let $L\in\EE_+$. Since $L$ forms an $ABABA$ pattern with $H$, the edge $L'=L\setminus\{N+1\}\in\EE$ must form an $ABAB$ pattern with $H$; a contradiction by the previous arguments.
Hence $\pi'$ is $ABABA$-free for $H'$.

Conversely, let $\HH'$ be $ABABA$-free and $\pi'$ be any $ABABA$-free ordering for $\HH'$.
By Lemma \ref{lem: two intervals}, $N+1$ should be at the last or the first position in $\pi'$.
Assume the former wlog.
We claim that the ordering $\pi$ that we get by deleting $N+1$ from $\pi'$ is an $ABAB$-free ordering for $\HH$.
Indeed, if there is an $ABAB$ pattern for two hyperedges $H,L$ of $\HH$ in $\pi$, then $H,L$ are also hyperedges of $\HH'$ and they form an $ABAB$ pattern in $\pi'$.
Assume, wlog, that $H$ plays the role of $A$ in the $ABAB$ pattern.
This implies that there is an $ABABA$ pattern formed by $H_+=H\cup\{N+1\}$ and $L$ in $\pi'$ for the hypergraph $\HH'$; a contradiction.
Hence, $\HH$ is $ABAB$-free.
This completes the proof.
\end{proof}
\section{Proof of Theorem \ref{thm:even} and Theorem \ref{thm:odd}}\label{sec:oddeven}
We show Theorem \ref{thm:odd} and Theorem \ref{thm:even} using induction on $k$.  The base cases are covered by Theorem \ref{thm:abab} and Theorem \ref{thm:ababa}.

For a hypergraph $\HH=(V,\EE)$ and $t\in \mathbb{N}$, let $\HH_t$ denote the hypergraph whose vertex set is $V'=V\cup X$ for some new vertices $X=\{x_1,x_2,\ldots,x_{t}\}$ and the edge set is $\EE'=\EE\cup\{E\cup \{x\}:E\in\EE,\; x\in X\}$. 

\begin{lemma}\label{lem:odd-odd}
Let $\HH=(V,\EE)$ be a given hypergraph. Then for any $k\in \mathbb{N}$ the hypergraph $\HH$ is $(AB)^{k-1}A$-free if and only if $\HH_{2k+1}$ is $(AB)^kA$-free.
\end{lemma}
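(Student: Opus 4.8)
The plan is to prove both implications by comparing $(AB)^kA$-patterns of $\HH_{2k+1}$ with respect to an ordering $\pi'$ of $V'=V\cup X$ against $(AB)^{k-1}A$-patterns of $\HH$ with respect to the restriction $\pi=\pi'|_V$. The guiding observation is that for any two hyperedges $P,Q\in\EE'$ the symmetric difference $P\triangle Q$ contains \emph{at most two} vertices of $X$: each of $P,Q$ is either an original edge or an original edge together with a single new vertex, and if both use the same new vertex it cancels in the symmetric difference. Consequently a vertex of $X$ can serve as an $A$-symbol for at most one of $P,Q$ and as a $B$-symbol for at most the other, so moving between $\HH$ and $\HH_{2k+1}$ changes the length of the longest alternating pattern of a fixed pair by at most two. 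This is exactly the gap between the threshold $2k-2$ (no pair alternates longer than this iff the ordering is $(AB)^{k-1}A$-free) and the threshold $2k$ (iff it is $(AB)^kA$-free).

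For the forward direction I would take an $(AB)^{k-1}A$-free ordering $\pi$ of $V$ and let $\pi'$ be $\pi$ with all of $x_1,\dots,x_{2k+1}$ appended at the very end. For any pair $P,Q\in\EE'$, the at most two vertices of $(P\triangle Q)\cap X$ lie at the end of $\pi'$, so any alternating pattern of $P,Q$ is an alternating pattern of their original parts $H=P\cap V$ and $L=Q\cap V$ (of length at most $2k-2$, since $\pi$ is $(AB)^{k-1}A$-free) followed by at most two further symbols. Hence every pair alternates at most $2k$ times, and $\pi'$ is an $(AB)^kA$-free ordering of $\HH_{2k+1}$.

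For the converse I would take any $(AB)^kA$-free ordering $\pi'$ of $V'$, set $\pi=\pi'|_V$, and argue by contradiction. Suppose $H,L\in\EE$ form an $(AB)^{k-1}A$-pattern in $\pi$, given by $2k-1$ vertices $p_1<\dots<p_{2k-1}$; these split $\pi'$ into $2k$ consecutive gaps. Since there are $2k+1$ new vertices, pigeonhole forces two of them, say $u$ before $v$, into a common gap. Inserting $u,v$ at that gap and augmenting the two edges appropriately — putting whichever of $u,v$ must play the $A$-symbol into the edge containing $H$ and the other into the edge containing $L$, the choice being dictated by the type of gap (before $p_1$, after $p_{2k-1}$, or between two consecutive $p_i,p_{i+1}$ according as $p_i$ is an $A$- or a $B$-symbol) — produces an $(AB)^kA$-pattern of $\HH_{2k+1}$ in $\pi'$, a contradiction. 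This count is exactly where the constant $2k+1$ enters: the $2k-1$ pattern points create $2k$ gaps, so $2k+1$ is the least number of new vertices that guarantees a doubled gap.

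The step I expect to be the main obstacle is the bookkeeping in the converse: checking in each gap-type that one of the two available orientations of $u,v$ (which joins $H$, which joins $L$) preserves the global alternation and genuinely lengthens the pattern by two. I would dispatch this with a short, uniform case analysis over the gap-types listed above, in each case exhibiting the correct orientation. Everything else is routine once the ``at most two new vertices in a symmetric difference'' observation and the pigeonhole count are in place.
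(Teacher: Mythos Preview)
Your proposal is correct and matches the paper's proof essentially line for line: the forward direction appends $x_1,\dots,x_{2k+1}$ at the end and uses that at most two pattern vertices can come from $X$, while the converse uses the pigeonhole count ($2k+1$ new vertices into $2k$ gaps created by the $2k-1$ pattern vertices) to find two new vertices in a common gap and augments $H,L$ accordingly. The only difference is expository: you spell out the gap-type case analysis that the paper compresses into the single sentence ``either $H\cup\{x\}$ and $L\cup\{y\}$, or $H\cup\{y\}$ and $L\cup\{x\}$ form an $(AB)^kA$ pattern.''
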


\begin{proof}
Let $\HH$ be $(AB)^{k-1}A$-free and $\pi$ be an $(AB)^{k-1}A$-free ordering of $V$. We will show that $\HH_{2k+1}=(V',\EE')$ is $(AB)^kA$-free.
Let $\pi=v_1,v_2,\dots, v_n$.
Consider the ordering $\pi'=v_1,\dots, v_n,x_1,\ldots,x_{2k+1}$ of $V'$.
If there are hyperedges $H',L'\in\EE'$ which form an $(AB)^kA$ pattern, then there are vertices $h_1,l_1,\ldots,h_k,l_k,h_{k+1}$ in this order in $\pi'$ such that $h_i\in H'\setminus L'$ for $1\le i\le k+1$ and $l_j\in L'\setminus H'$ for $1\le j\le k$.
Let $H,L\in\EE$ be the intersections of $H',L'$ respectively, with the vertex set $V$.
Since $H'$ and $L'$ contain at most one vertex of $X$, we have $|H'\setminus H|\le 1$ and $|L'\setminus L|\le 1$.
Also, as $\HH$ is $(AB)^{k-1}A$-free, in the sequence $h_1,l_1,\ldots,h_k,l_k,h_{k+1}$, at least the last three vertices must be from $X$ otherwise $H$ and $L$ form an $(AB)^{k-1}A$ pattern in the ordering $\pi$.
However, this is not possible because of $|H'\setminus H|\le 1$ and $|L'\setminus L|\le 1$.
Thus $\pi'$ is an $(AB)^{k}A$-free ordering for $\HH_{2k+1}$.

Conversely, assume that $\HH_{2k+1}$ is $(AB)^kA$-free. We will show that $\HH$ is $(AB)^{k-1}A$-free.
Let $\pi'$ be an $(AB)^kA$-free ordering of $V'$ for the hypergraph $\HH_{2k+1}$ and let $\pi$ be an induced ordering of $\pi'$ on the vertex set $V$.
Suppose there are hyperedges $H,L\in\EE$ which form an $(AB)^{k-1}A$ pattern in $\HH$ and let $h_1,l_1,\ldots,h_{k-1},l_{k-1},h_k$ be vertices in this order in $\pi$ such that $h_i\in H\setminus L$ for $1\le i\le k$ and $l_j\in L\setminus H$ for $1\le j\le {k-1}$.
Since, $|X|=2k+1$ and $|\{h_1,l_1,\ldots,h_{k-1},l_{k-1},h_k\}|=2k-1$, it follows that in the ordering $\pi'$,
there must be at least two vertices $x,y\in X$ which lie between two consecutive elements of the sequence $h_1,l_1,\ldots,h_{k-1},l_{k-1},h_k$ or they both occur before or after all the elements of this sequence.
Then either $H\cup \{x\}$ and $L\cup\{y\}$, or $H\cup \{y\}$ and $L\cup\{x\}$ form an $(AB)^kA$ pattern in $\pi'$; a contradiction. Hence, $\pi$ is an $(AB)^{k-1}A$-free ordering for $\HH$.
\end{proof}

\begin{lemma}\label{lem:even-even}
Let $\HH=(V,\EE)$ be a given hypergraph. Then for any $k\in \mathbb{N}$ the hypergraph $\HH$ is $(AB)^{k}$-free if and only if $\HH_{2k+2}$ is $(AB)^{k+1}$-free.
\end{lemma}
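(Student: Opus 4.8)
The plan is to mirror the proof of Lemma~\ref{lem:odd-odd} almost verbatim, adjusting the arithmetic for the even pattern. Write $\HH_{2k+2}=(V',\EE')$ with $V'=V\cup X$, $X=\{x_1,\dots,x_{2k+2}\}$ and $\EE'=\EE\cup\{E\cup\{x\}:E\in\EE,\,x\in X\}$, and recall that every edge of $\EE'$ meets $X$ in at most one vertex, so for $H'\in\EE'$ with $H=H'\cap V\in\EE$ we have $|H'\setminus H|\le 1$. I would prove the two implications separately.

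For the forward direction, suppose $\pi=v_1,\dots,v_n$ is an $(AB)^k$-free ordering of $V$ and form $\pi'=v_1,\dots,v_n,x_1,\dots,x_{2k+2}$. Assume toward a contradiction that some $H',L'\in\EE'$ form an $(AB)^{k+1}$ pattern $h_1,l_1,\dots,h_{k+1},l_{k+1}$ in $\pi'$, and set $H=H'\cap V$, $L=L'\cap V$, which lie in $\EE$. Since all of $X$ comes last in $\pi'$, the vertices of the pattern that lie in $X$ form a suffix of $h_1,l_1,\dots,h_{k+1},l_{k+1}$; because $|H'\setminus H|\le 1$ and $|L'\setminus L|\le 1$, at most one $h_i$ and at most one $l_j$ can be in $X$, so this suffix has length at most two and is contained in $\{h_{k+1},l_{k+1}\}$. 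Hence $h_1,l_1,\dots,h_k,l_k$ all lie in $V$ and witness an $(AB)^k$ pattern for $H,L$ in $\pi$, contradicting the choice of $\pi$.

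For the converse, I would take an $(AB)^{k+1}$-free ordering $\pi'$ of $V'$, let $\pi$ be its restriction to $V$, and suppose $H,L\in\EE$ form an $(AB)^k$ pattern $h_1,l_1,\dots,h_k,l_k$ in $\pi$. These $2k$ vertices split $\pi'$ into $2k+1$ gaps, and since $|X|=2k+2$ the pigeonhole principle yields two vertices $x,y\in X$ with $x$ before $y$ lying in a common gap; this is exactly where the value $2k+2$ is used and is the analogue of the counting step in Lemma~\ref{lem:odd-odd}. The plan is then to set $H_+$ and $L_+$ to be $H$ and $L$ each enlarged by one of $x,y$, and to insert $x,y$ into the pattern as a new $A$-vertex and a new $B$-vertex, thereby producing an $(AB)^{k+1}$ pattern in $\pi'$ and the desired contradiction.

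The main obstacle, and the only genuinely new point compared with the forward direction, is that the gap containing $x$ and $y$ need not have the right ``parity'': if $x,y$ sit between some $h_i$ and $l_i$, then inserting a new $A$ before a new $B$ would create two consecutive $A$'s and destroy the alternation. This is resolved exactly as in the ``either $\dots$ or $\dots$'' step of Lemma~\ref{lem:odd-odd}, by choosing the assignment according to the gap: if the gap lies immediately before some $h_i$ or at the very end after $l_k$, I take $H_+=H\cup\{x\}$ and $L_+=L\cup\{y\}$ and read $x,y$ as the new $A,B$; if the gap lies between some $h_i$ and $l_i$, I instead take $H_+=H\cup\{y\}$ and $L_+=L\cup\{x\}$, reading $x,y$ as the new $B,A$. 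In either case a short check of the memberships ($h_j\in H_+\setminus L_+$, $l_j\in L_+\setminus H_+$, and the two new vertices in the correct set differences, all using $x\ne y$ and $X\cap V=\emptyset$) confirms that the enlarged sequence is a genuine $(AB)^{k+1}$ pattern, giving the desired contradiction and completing the proof.
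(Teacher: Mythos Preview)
Your proof is correct and follows exactly the approach the paper intends: the paper's own proof of this lemma is simply ``The proof is the same as that of Lemma~\ref{lem:odd-odd}'', and you have carried out precisely that adaptation, making the parity case split in the converse direction explicit. If anything, your write-up is more detailed than the paper's, which in Lemma~\ref{lem:odd-odd} only states ``either $H\cup\{x\}$ and $L\cup\{y\}$, or $H\cup\{y\}$ and $L\cup\{x\}$'' without spelling out which gap forces which choice.
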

\begin{proof}
The proof is the same as that of Lemma \ref{lem:odd-odd}.
\end{proof}

Now the proofs of Theorem \ref{thm:even} and Theorem \ref{thm:odd} are direct implications of the two lemmas above, Theorem \ref{thm:abab} and Theorem \ref{thm:ababa}.

\section{Related questions}\label{sec:questions}

A family of \emph{pseudocircles} is a set of closed Jordan curves such that every two of them are either disjoint, intersect at exactly one point in which they touch, or intersect at exactly two points in which they properly cross each other. A family of \emph{pesudodisks} is a collection of compact planar regions whose boundaries form a family of pseudocircles.

A family $\mathcal{A}$ of pseudo disks in the plane is called a \emph{stabbed} pseudodisk arrangement if there is a point in the plane that lies in every pseudodisk of $\mathcal{A}$.

\begin{theorem}[Ackerman, Keszegh, Pálvölgyi \cite{ackerman2020coloring}]\label{thm:pseudo}
    A hypergraph can be realized as the incidence hypergraph of stabbed pseudodisks and points in the plane if and only if it is $ABAB$-free.
\end{theorem}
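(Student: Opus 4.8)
The plan is to prove both implications separately, after first reducing the geometric side to a cleaner combinatorial-topological model. Up to the harmless distinction between touching and crossing, a \emph{stabbed} pseudodisk arrangement is the same as a family of Jordan curves (the boundaries $\partial D_E$) that pairwise cross at most twice and all enclose a fixed common point $p$ in their interior, with $v\in E$ meaning that the point $v$ lies inside $\partial D_E$. Touchings can be removed by an arbitrarily small perturbation without changing any incidence, and points can be nudged off boundaries, so I will work with this model. Since $ABAB$-freeness is invariant under cyclic shifts by Observation~\ref{obs:cyclic}, the relevant object on the vertex side is a \emph{cyclic} order; this matches the cyclic structure of directions around $p$, and fixing any ray from $p$ to infinity that avoids the points turns the cyclic order into a linear one.

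For the direction \emph{$ABAB$-free $\Rightarrow$ realizable}, I would take an $ABAB$-free cyclic order of $V$, place $p$ at the origin, and place the points at distinct angles around $p$ in this cyclic order (all at radius $1$, say). For each hyperedge $E$ I build a star-shaped region $D_E$ about $p$ whose boundary is a radial graph $r=\rho_E(\theta)$ with $\rho_E$ large (above $1$) at the angles of points of $E$ and small (below $1$) at the angles of the remaining points, interpolated continuously. Then $v\in D_E$ iff $v\in E$, and two boundaries cross exactly where $\rho_E-\rho_{E'}$ changes sign; the number of such sign changes around the circle equals the number of times the points, read in cyclic order, alternate between $E\setminus E'$ and $E'\setminus E$. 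An $ABAB$ pattern is precisely such an alternation occurring four times, so $ABAB$-freeness forces at most two sign changes, hence at most two crossings, making the $\partial D_E$ a valid stabbed pseudodisk arrangement. The only care needed is that points of $E\cap E'$ or of neither set do not force spurious crossings; this is arranged by perturbing the interpolation so that $\rho_E-\rho_{E'}$ keeps the sign of its neighbouring symmetric-difference point there. This part is routine.

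For the converse \emph{realizable $\Rightarrow$ $ABAB$-free}, given the curves around $p$ I would order the points by their angle around $p$ (ties broken arbitrarily) and claim the resulting cyclic order is $ABAB$-free. To analyse this I would pass to the cylinder model $\mathbb{R}^2\setminus\{p\}\cong S^1\times\mathbb{R}$ via $x\mapsto(\theta(x),\log|x-p|)$, where $\theta(x)$ is the angle of $x-p$ around $p$. Each Jordan curve $\partial D_E$ maps to a compact curve separating the two ends of the cylinder, hence to a single non-contractible closed curve winding once around $S^1$, and the inside of $D_E$ becomes the region on the $-\infty$ side of this curve. Suppose some pair $A=D_E$, $B=D_{E'}$ formed an $ABAB$ pattern, i.e.\ there were points $w_1,w_2,w_3,w_4$ of strictly increasing angle with $w_1,w_3$ inside $A$ and outside $B$, and $w_2,w_4$ inside $B$ and outside $A$. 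I would argue that as the angle sweeps through $\theta(w_1)<\dots<\theta(w_4)$, the parity data recording ``inside $A$'' versus ``inside $B$'' must disagree on four separate angular arcs, each of which must therefore contain a crossing of $\partial A$ and $\partial B$, producing at least four intersection points and contradicting the pseudodisk condition.

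The main obstacle is exactly this last step when the pseudodisks are not star-shaped about $p$: then a radial ray can meet a boundary several times, so ``inside $D_E$'' is \emph{not} simply ``below a single value $\rho_E(\theta)$'', and one cannot read the crossing count off a naive radial function. The clean way around this is to keep the argument purely topological on the cylinder: each vertical line meets each winding curve an odd number of times, membership in $D_E$ is governed by the parity of crossings below the point, and an angular $ABAB$-alternation of the four sample points forces the parities of $A$ and $B$ to differ on four angular intervals, each of which must contain a genuine crossing. One might instead hope to normalise the arrangement by a single homeomorphism fixing $p$ that makes \emph{all} the $D_E$ simultaneously star-shaped, thereby reducing to the easy graph picture; but such a simultaneous normalisation need not exist, so I expect the topological parity argument to be the crux of the proof.
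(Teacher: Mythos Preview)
The paper does not prove this theorem at all: Theorem~\ref{thm:pseudo} is quoted as an external result from \cite{ackerman2020coloring} and is used as a black box in the subsequent NP-completeness argument. There is therefore no proof in the present paper to compare your proposal against.

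That said, your outline is broadly along the lines of the original argument in \cite{ackerman2020coloring}: in the forward direction one places the vertices on a circle around the stabbing point in the $ABAB$-free cyclic order and realizes each hyperedge by a star-shaped region whose radial function is large on its vertices and small elsewhere, with the $ABAB$-free condition bounding the number of sign changes of $\rho_E-\rho_{E'}$ by two; in the backward direction one passes to the punctured plane (equivalently the cylinder), where each boundary becomes a curve winding once, and argues that four cyclically alternating witness points force at least four crossings. Your identification of the non--star-shaped case as the delicate point is correct. The one place where your sketch is still loose is the sentence ``each of which must contain a genuine crossing'': what you actually need is that the curve $\partial A$, followed once around the cylinder, must pass from the $-\infty$ side of $\partial B$ to the $+\infty$ side and back at least twice, and this is a statement about the \emph{pair of curves} rather than about vertical lines through the $w_i$; the points $w_i$ only certify on which side of $\partial B$ the curve $\partial A$ lies at four cyclically separated angular positions. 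Making this precise requires a short intersection-number (or Jordan curve) argument on the cylinder, not merely a parity count on individual rays.
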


\begin{theorem}
It is NP-complete to decide whether an abstract hypergraph can be realized as the incidence hypergraph of points and pseudodisks in the plane or not.
\end{theorem}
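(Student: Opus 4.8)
The plan is to prove NP-completeness by establishing membership in NP and NP-hardness separately, with the hardness being the essential new content and following by a direct reduction from recognizing $ABAB$-free hypergraphs, which is NP-complete by Theorem~\ref{thm:abab} together with the membership argument given in the introduction. The bridge is Theorem~\ref{thm:pseudo}, which says that a hypergraph admits a \emph{stabbed} pseudodisk realization precisely when it is $ABAB$-free. Hence the entire reduction amounts to a single gadget that forces an arbitrary point--pseudodisk realization to be stabbed: I would add one universal vertex that is compelled to lie inside every pseudodisk, thereby supplying the stabbing point.

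Concretely, given an instance $\HH=(V,\EE)$ of $ABAB$-free recognition, I would construct $\HH^+=(V\cup\{p\},\{E\cup\{p\}:E\in\EE\})$, where $p$ is a new vertex inserted into every hyperedge. This map is polynomial-time computable and injective on edges. The claim is that $\HH^+$ is realizable as a point--pseudodisk incidence hypergraph if and only if $\HH$ is $ABAB$-free. For the forward direction, if $\HH$ is $ABAB$-free then by Theorem~\ref{thm:pseudo} it has a stabbed pseudodisk realization with some stabbing point $q$; placing the point that represents $p$ at $q$ makes $p$ incident to every pseudodisk and realizes $\HH^+$. For the converse, in any realization of $\HH^+$ the point representing $p$ lies in every pseudodisk, since $p$ belongs to every edge; deleting that point therefore leaves a realization of $\HH$ in which its former location stabs all pseudodisks, so $\HH$ has a stabbed realization and, again by Theorem~\ref{thm:pseudo}, is $ABAB$-free. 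This yields the NP-hardness.

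For membership in NP I would exploit the combinatorial nature of pseudodisk arrangements. Given a realization, after a generic perturbation the $m=|\EE|$ boundary pseudocircles are in general position, so their arrangement has only $O(m^2)$ vertices, edges and faces; the proposed certificate is this arrangement, encoded as a plane subdivision with its rotation system, together with the face containing each of the $n$ points. This certificate has polynomial size, and checking that the point--face incidences reproduce the target hypergraph is immediate. The step that requires care, and which I expect to be the main obstacle, is verifying in polynomial time that the guessed abstract arrangement is genuinely realizable by a family of Jordan curves pairwise crossing at most twice: unlike the hardness direction this does not follow from the machinery developed above, and would have to be settled either by a direct reconstruction of bounded-complexity pseudodisks from the combinatorial data or by invoking known realizability results for arrangements of pseudocircles. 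Combining this with the reduction of the previous paragraph then gives NP-completeness.
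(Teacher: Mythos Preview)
Your reduction is exactly the paper's: add one universal vertex so that any point--pseudodisk realization is automatically stabbed, then invoke Theorem~\ref{thm:pseudo} together with Theorem~\ref{thm:abab}. The paper dispatches NP membership in a single unelaborated sentence (``we can give a short combinatorial description of the realizing arrangement''), so your more careful discussion of that step already goes beyond what the paper supplies.
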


\begin{proof}
The problem is in NP, because we can give a short combinatorial description of the realizing arrangement.

To prove NP-hardness, we show that $ABAB$-freeness is reducible to recognizing pseudodisk hypergraphs. Let ${\HH=(V,\EE)}$ be a given hypergraph and we want to decide whether it is $ABAB$-free. Consider the hypergraph $\HH'=(V',\EE')$ where $V'=V\cup \{x\}$ and $\EE'=\{E\cup\{x\}:E\in\EE\}$.
Clearly, $\HH'$ can be represented as a hypergraph of pseudodisks in the plane if and only if $\HH'$ can be represented as a hypergraph of \emph{stabbed} pseudodisks in the plane.
By Theorem \ref{thm:pseudo}, a hypergraph is a hypergraph of stabbed pseudodisks in the plane if and only if it is $ABAB$-free. Furthermore, as $x$ lies in every edge, it cannot take part in an $ABAB$ pattern. Hence $H'$ is $ABAB$-free if and only if $H$ is $ABAB$-free. Therefore, $H$ is $ABAB$-free if and only if  $H'$ can be represented as the incidence hypergraph of points and pseudodisks in the plane.

Applying Theorem \ref{thm:abab} we conclude that it is NP-hard to decide if $\HH$ is a hypergraph of points and pseudodisks in the plane.
\end{proof}

\subsection{Open questions}\label{sec:open}
Here we collect a number of open questions. The most natural is the missing $k=1$ case in Theorem \ref{thm:odd}. 

\begin{problem}
    Is it NP-complete to decide if a given hypergraph is $ABA$-free?
\end{problem}

We mention some related results.
It was proved by Opatrny  \cite{opatrny} that the related BETWEENNESS problem is NP-complete.
Here our input is a collection of ordered triples of some base set $V$, and the question is whether there is an ordering of $V$ in which for all ordered triples the middle element is between the other two elements of the triple in the ordering.
It is not hard to see that this implies that NON-BETWEENNESS is also NP-complete, i.e., when our input is a collection of ordered triples of some base set $V$, and the question is whether there is an ordering of $V$ for which in all ordered triples the middle element is \emph{not} in the middle.
Deciding $ABA$-freeness is a special case of NON-BETWEENNESS.
We also mention a recent result about Helly properties of such relations \cite{gezaek} (and note that $ABA$-free hypergraphs do not have the Helly property, as shown by the $2$-uniform (hyper)graphs $C_n$). 

In our proofs there are some hyperedges that have linear number of vertices.
As mentioned in the introduction, the $ABAB$-freeness of a 2-uniform hypergraph can be decided in polynomial time since they are exactly the outerplanar graphs \cite{ackerman2020coloring}.
However, we do not know if the problem is in $P$ even for 3-uniform hypergraphs; maybe these problems become tractable if we bound the sizes of the hyperedges.

In the proofs we have used $k$-intervals often which is strongly connected to the so-called consecutive ones property. Let us say that a $(0,1)$-matrix has the \emph{$k$-consecutive ones property} if there exists a row order such that in each column the occurrences of all ones appear in at most $k$ consecutive blocks. Clearly, the incidence matrix of a hypergraph has the $k$-consecutive ones property if and only if there is an ordering of the vertices such that every hyperedge is a union of at most $k$ intervals in that ordering.

Deciding if a given (0,1)-matrix has the  $1$-consecutive ones property can be done in polynomial time \cite{BOOTH1976335,pjm/1102995572}. On the other hand, for $k\ge 2$ it is NP-complete to decide whether a given (0,1)-matrix has the $k$-consecutive ones property \cite{goldberg1995four}. This latter result would also follow from our methods with simple modifications, even for the circular k-interval case, but we omit these.

\acknowledgements
\label{sec:ack}
This research started during the special semester on Discrete Geometry and Convexity at the Erdős Center, Budapest, in 2023, supported by ERC Advanced Grants ``GeoScape" and ``ERMiD", no. 882971 and 101054936.
The research was also partially supported by IIIT-Delhi through the Overseas Research Fellowship.

\nocite{*}
\bibliographystyle{abbrvnat}
\bibliography{ref}
\label{sec:biblio}

\end{document}